\newtheorem{thm}{Theorem}
\newtheorem{definition}{Definition}
\newtheorem{claim}{Claim}
\newtheorem{lemma}[thm]{Lemma}
\theoremstyle{remark}
\newtheorem{construction}{Construction}
\newtheorem*{remark}{Remark}
\newcommand{\abs}[1]{\left\lvert{#1}\right\rvert}
\DeclareMathOperator{\Def}{Def}
\DeclareMathOperator{\ex}{ex}
\DeclareMathOperator{\exlin}{ex_{lin}}
\title{The exact linear Tur\'an number of the sail}
\author{Beka Ergemlidze}
\address{Beka Ergemlidze, Department of Mathematics and Statistics, University of South Florida,
Tampa, Florida 33620, USA.}
\email{\texttt{beka.ergemlidze@gmail.com}}
\author{Ervin Gy\H{o}ri}
\address{Ervin Gy\H{o}ri, Alfr\'ed R\'enyi Institute of Mathematics, Budapest.}
\email{\texttt{gyori.ervin@renyi.mta.hu}} 
\author{Abhishek Methuku}
\address{Abhishek Methuku,  School of Mathematics,
 University of Birmingham,
Edgbaston,
Birmingham B15 2TT,
United Kingdom.}
\email{\texttt{abhishekmethuku@gmail.com}}
\thanks{The research leading to these results was supported by the EPSRC, grant no. EP/S00100X/1 (A. Methuku). The research of all authors was also partially supported by the National
Research, Development and Innovation Office NKFIH, grants K116769, K132696.
}
\date{\today}
\begin{document}

\begin{abstract}
A hypergraph is \emph{linear} if any two of its edges intersect in at most one vertex. 
The sail (or $3$-fan) $F^3$ is the $3$-uniform linear hypergraph consisting of $3$ edges $f_1, f_2, f_3$ pairwise intersecting in the same vertex $v$ and an additional edge $g$ intersecting all $f_i$ in a vertex different from $v$. The \emph{linear Tur\'an number} $\exlin(n, F^3)$ is the maximum number of edges in a $3$-uniform linear hypergraph on $n$ vertices that does not contain a copy of $F^3$.

\vspace{1mm}

F\"{u}redi and Gy\'arf\'as proved that if $n = 3k$, then $\exlin(n, F^3) = k^2$ and the only extremal hypergraphs in this case are transversal designs. They also showed that if $n = 3k+2$, then $\exlin(n, F^3) = k^2+k$, and the only extremal hypergraphs are truncated designs (which are obtained from a transversal design on $3k+3$ vertices with $3$ groups by removing one vertex and all the hyperedges containing it) along with three other small hypergraphs. However, the case when $n =3k+1$ was left open.

\vspace{1mm}

In this paper, we solve this remaining case by proving that $\exlin(n, F^3) = k^2+1$ if $n = 3k+1$, answering a question of F\"{u}redi and Gy\'arf\'as. We also characterize all the extremal hypergraphs. The difficulty of this case is due to the fact that these extremal examples are rather non-standard. In particular, they are not derived from transversal designs like in the other cases.
\end{abstract}

\maketitle

\section{Introduction}

An $r$-uniform hypergraph (or an $r$-graph) $H = (V, E)$ consists of a set $V$ of vertices and a set $E$ of edges, where each edge is a subset of $V$. A hypergraph is $r$-partite if its vertices can be partitioned into $r$ parts so that each edge has exactly one vertex from each part. $3$-uniform hypergraphs are also called \emph{triple systems}. A hypergraph is \emph{linear} if any two of its edges intersect in at most one vertex. In design theory, $3$-uniform linear hypergraphs are called \emph{partial triple systems} and small fixed partial triple systems are called \emph{configurations} (see \cite{Colbourn}).

\vspace{2mm}

Suppose $F$ is an $r$-uniform hypergraph. The \emph{Tur\'an number} $\ex(n,F)$ is the maximum number of edges in an $r$-graph on $n$ vertices that does not contain a copy of $F$. The \emph{linear Tur\'an number} $\exlin(n, F)$ is the maximum number of edges in an $r$-uniform \emph{linear} hypergraph on $n$ vertices that does not contain a copy of $F$. The linear $r$-graphs with $\exlin(n, F)$ edges are called \emph{extremal hypergraphs}.

\vspace{2mm}

Linear Tur\'an numbers of linear cycles have been studied. An $r$-uniform linear cycle of length $\ell$, $C_{\ell}^r$ is a hypergraph with edges $e_1, e_2, \ldots, e_{\ell}$ of size $r$ such that $\abs{e_i \cap e_{i+1}} =1$ for each $i \in [\ell - 1]$, $\abs{e_{\ell} \cap e_{1}} =1$, and $e_i \cap e_j = \emptyset$ for all other pairs $i, j$, $i \not = j$. Determining $\exlin(n, C_{3}^3)$ is equivalent to the famous $(6,3)$-problem, which asks for the maximum number of edges in a $3$-graph on $n$ vertices in which no $6$ vertices contain $3$ or more edges. In one of the classical results in extremal combinatorics, Ruzsa and Szemer\'edi \cite{RSz} showed that $\exlin(n, C_3^3) = o(n^2)$. Recently,  Collier-Cartaino, Graber and Jiang \cite{CartainoGraberJiang} showed that $\exlin(n, C_{\ell}^r) = O(n^{1+\frac{1}{\lfloor \ell/2\rfloor}})$ for all $r, \ell \ge 3$.  See \cite{BergeC5, K2t, K2ttimmons} for a recent study of Linear Tur\'an numbers for Berge hypergraphs. 

\vspace{2mm}

For any integer $r \ge 2$,  the \emph{$r$-fan}, $F^r$, is the $r$-uniform linear hypergraph having $r+1$ edges $f_1, \ldots, f_r$ and $g$ such that $f_1, \ldots, f_r$ all contain the same vertex $v$ and $g$ intersects all $f_i$ in a vertex different from $v$. The $3$-fan is also called as \emph{sail} (which is configuration $C_{15}$ in \cite{Colbourn}); see Figure \ref{sailfigure}. The classical Tur\'an number $\ex(n, F^r)$ is determined by Mubayi and Pikhurko \cite{MubayiPikhurko}, who showed that the extremal hypergraphs are complete $r$-partite $r$-graphs with parts of almost equal size.

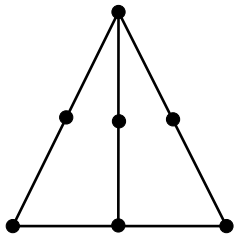
\begin{figure}[h]
\label{sailfigure}
\centering
\begin{tikzpicture}[line cap=round,line join=round,x=4cm,y=4cm]
\draw [line width=1.pt] (-7.0858666666667,5.776809384679896)-- (-7.437152447012416,5.064717008355804);
\draw [line width=1.pt] (-6.727104903779861,5.064717008355804)-- (-7.0858666666667,5.776809384679896);
\draw [line width=1.pt] (-6.727104903779861,5.064717008355804)-- (-7.437152447012416,5.064717008355804);
\draw [line width=1.pt] (-7.086458233586581,5.066881787451026)-- (-7.0858666666667,5.776809384679896);
\begin{scriptsize}
\draw [fill=black] (-7.437152447012416,5.064717008355804) circle (2.5pt);
\draw [fill=black] (-7.0858666666667,5.776809384679896) circle (2.5pt);
\draw [fill=black] (-6.727104903779861,5.064717008355804) circle (2.5pt);
\draw [fill=black] (-7.086458233586581,5.066881787451026) circle (2.5pt);
\draw [fill=black] (-7.259640561204277,5.426235117257742) circle (2.5pt);
\draw [fill=black] (-6.904616789587999,5.419740779972079) circle (2.5pt);
\draw [fill=black] (-7.08429345449136,5.4132464426864155) circle (2.5pt);
\end{scriptsize}
\end{tikzpicture}
\caption{Sail (or $F^3$)}
\end{figure}

F\"{u}redi and Gy\'arf\'as \cite{FurediGyarfas} studied the linear Tur\'an number of $F^r$, and showed that $\exlin(n, F^r) \le n^2/r^2$. Moreover, they showed that $\abs{E(H)} = n^2/r^2$ holds if and only if $n \equiv 0 \pmod{r}$ and $H$ is a \emph{Transversal design} $T(n, r)$ on $n$ vertices with $r$ groups -- where $T(n,r)$ is defined as an $r$-partite hypergraph with groups of equal size (thus $n$ is a multiple of $r$) such that each pair of vertices from different groups is contained in exactly one hyperedge. It is well-known that transversal designs exist for all $n > n_0(r)$ if $r$ divides $n$. A \emph{truncated design} is obtained from a transversal design by removing one vertex, and all the hyperedges containing it. 

\vspace{2mm}

F\"{u}redi and Gy\'arf\'as \cite{FurediGyarfas} determined the exact value of $\exlin(n, F^r)$ only in the cases when $n \equiv 0 \pmod{r}$ or $n \equiv -1 \pmod{r}$. They mentioned that determining the exact value of $\exlin(n, F^r)$ for all values of $n$ seems to be a difficult problem, let alone giving a description of all extremal hypergraphs. They also remarked that the study of the remaining cases might reveal some (possibly infinitely many) exceptional extremal configurations. For triple systems, they showed that if $n = 3k$, then $\exlin(n, F^3) = k^2$, with Transversal designs $T(3k,3)$ being the extremal hypergraphs (as discussed above). In the case when $n = 3k+2$, they showed that $\exlin(n, F^3) = k^2+k$, and the only extremal hypergraphs are truncated designs obtained from a transversal design $T(3k+3, 3)$, along with three small hypergraphs. 

\vspace{2mm}

This leaves the case $n = 3k+1$  open. Addressing the question of F\"{u}redi and Gy\'arf\'as \cite{FurediGyarfas}, we solve this remaining case, thus completing the determination of $\exlin(n, F^3)$ for all $n$. We give a characterization of all the extremal hypergraphs. Surprisingly, this case leads to a rich set of new extremal hypergraphs that are rather non-standard and seem to be very different in spirit from the extremal hypergraphs for $n = 3k$ and $n = 3k+2$. In particular, they are not derived from transversal designs.


\begin{thm}
	\label{Main_Result}
If $n=3k+1$, then $\exlin(n, F^3) = k^2+1$. The only extremal hypergraphs are given by the four constructions below.
\end{thm}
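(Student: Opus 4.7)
The plan is to prove both directions of the equality together with a characterization of the extremal hypergraphs. For the lower bound $\exlin(3k+1, F^3) \ge k^2+1$, it suffices to verify directly that each of the four claimed constructions is linear and $F^3$-free. Linearity means no two edges share more than one vertex; $F^3$-freeness means that for every vertex $v$, no edge disjoint from $v$ meets three distinct edges through $v$ in three distinct vertices. Since the constructions are not derived from transversal designs, this verification amounts to a finite check on each construction's edge list rather than an appeal to a general structure theorem.

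For the upper bound I would argue by contradiction: assume $H$ is an $F^3$-free linear 3-graph on $n = 3k+1$ vertices with $|E(H)| \ge k^2+2$. The key input is the Füredi--Gyárfás theorem: for any vertex $v$, the restriction $H - v$ is $F^3$-free and linear on $3k$ vertices, so $|E(H-v)| \le k^2$, and equality forces $H-v$ to be a transversal design $T(3k,3)$. In particular $d_H(v) \ge 2$ for every $v$. The argument then splits into two cases. In the \emph{structured case}, some vertex $v^*$ satisfies $|E(H - v^*)| = k^2$, so $H - v^*$ is a transversal design with parts $A, B, C$ of size $k$; linearity then forces every edge through $v^*$ to have the form $\{v^*, x, y\}$ with $x, y$ in a common part, since every cross-part pair is already saturated by a design edge. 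With at least two such edges at $v^*$, one combines them with an appropriately chosen design edge to exhibit a sail centered either at $v^*$ or at a common endpoint, contradicting $F^3$-freeness. In the \emph{unstructured case}, $|E(H - v)| \le k^2 - 1$ for every $v$, so $d_H(v) \ge 3$ at every vertex; a degree-sum argument together with a stability refinement of Füredi--Gyárfás (controlling hypergraphs on $3k$ vertices with close to $k^2$ edges) should reduce this case either to a small base handled by hand or to the structured case.

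For the characterization of extremal hypergraphs, one tracks the equality cases throughout the above analysis and matches them to the four constructions. The main obstacle I foresee is precisely this step: since the extremal examples are genuinely new and not perturbations of transversal designs, the argument cannot simply lift Füredi--Gyárfás uniqueness, and must instead identify a small rigid sub-configuration (for instance, a vertex of prescribed low degree, or a pair of edges violating the design-like pattern) and propagate that local structure globally. Distinguishing the four non-isomorphic constructions and ruling out any other near-extremal hypergraph on $3k+1$ vertices is where the bulk of the technical work is expected to lie, and the fact that the unstructured case must produce entirely new rigid families — rather than recover a familiar design — is the feature that makes the $n \equiv 1 \pmod 3$ case harder than the other residues already handled.
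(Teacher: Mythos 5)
Your lower bound plan (direct verification of the four constructions) matches the paper. Your ``structured case'' is also sound: if $|E(H)|\ge k^2+2$ and $H-v^*$ is a transversal design $T(3k,3)$, then linearity forces any edge through $v^*$ to be $\{v^*,x_1,x_2\}$ with $x_1,x_2$ in one part, and then a design edge through $x_2$ avoiding $x_1$ lies entirely inside $N(x_1)$, which yields a sail. The genuine gap is the ``unstructured case,'' which is in fact the entire problem, and it is not closed by what you propose. The degree-sum argument gives nothing: minimum degree $\ge 3$ yields $\sum_v d(v)\ge 9k+3$, which is far weaker than the bound $\sum_v d(v)\ge 3k^2+6$ you already have from the edge count, so no contradiction or structure emerges. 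The ``stability refinement of F\"uredi--Gy\'arf\'as'' you invoke is not stated, not proved, and cannot exist in the form you need: the actual extremal hypergraphs on $3k+1$ vertices (Constructions 1--4) are not perturbations of transversal designs, and deleting any vertex from a hypergraph with $k^2+1$ or $k^2+2$ edges and maximum degree about $k$ leaves roughly $k^2-k$ edges on $3k$ vertices --- nowhere near the $k^2$ threshold --- so the unstructured case never reduces to the structured one. Finally, the characterization of the extremal hypergraphs, which is part of the theorem statement, is explicitly deferred in your write-up (``where the bulk of the technical work is expected to lie''), so the proposal does not prove the statement.

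For contrast, the paper does not pass through $H-v$ or F\"uredi--Gy\'arf\'as uniqueness at all. It first shows the maximum degree is exactly $k$ (using that no hyperedge can lie inside a neighborhood $N(v)$, else a sail appears), introduces the deficiency $\Def(S)=\sum_{x\in S}(k-d(x))$ and proves $\Def(V(H))\le k-3$, then fixes a degree-$k$ vertex $v$ and splits on whether some hyperedge lies entirely in $S(v)=V\setminus N(v)$. In each case a counting argument pins down an edge $abc$ or a path $a$--$b$--$c$ in $(\partial H)[S(v)]$ carrying all the deficiency, shows the links $M_x$ of the vertices of $S(v)$ decompose $(\partial H)[N(v)]$ into $k-2$ perfect matchings plus a $2$-regular graph $M_a\cup M_b\cup M_c$ (possibly plus one edge), and concludes that $(\partial H)[N(v)]$ is either $K_{k,k}$ or (for $k=3$) two triangles joined by a matching; this is what produces the four constructions. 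If you want to salvage your outline, you would need to replace the appeal to stability by an argument of this local-to-global type centered on a maximum-degree vertex.
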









\begin{construction}
\label{generalconstr}
Let $k\ge 3$ be an integer, and let  $X=\{x_1,x_2,\ldots,x_k\}$, $Y=\{y_1,y_2,\ldots, y_k\}$, $Z=\{z_1,z_2,\ldots, z_{k-2},a,b,c\}$ be three vertex-disjoint sets. 
Let $B$ be a complete bipartite graph with parts $X$ and $Y$.

Let  $B_0=\{C_1,C_2,\ldots, C_l\}$ be a $2$-factor of $B$, where for each $i\in[l]$, $C_i$ denotes a cycle and the cycle $C_l$ is of length at least $6$. Take two disjoint edges $a'c',x'y'\in C_l$, such that $a',x'\in X$, $c',y'\in Y$ and $a'y',c'x'\notin C_l$. 
Properly $2$-color the edges of $B_0\setminus \{a'c',x'y'\}$ with colors $a$ and $c$, such that the color of the edge adjacent to $a'$ (in $B_0\setminus \{a'c',x'y'\}$) is $a$ and the color of the edge adjacent to $c'$ (in $B_0\setminus \{a'c',x'y'\}$) is $c$. Color $x'y'$ with the color $b$. 

Now we will construct the hypergraph $H_1$ as follows. For each edge $uv\in B_0\setminus \{x'y'\}$, let us add the edge $uva$ to $H_1$ if $uv$ has the color $a$; add the edge $uvb$ if $uv$ has the color $b$; add the edge $uvc$ if $uv$ has the color $c$.

Notice that $B\setminus B_0$ is $(k-2)$-regular. Decompose $B\setminus B_0$ arbitrarily into $k-2$ matchings $M_1,M_2,\ldots, M_{k-2}$, where each of the matchings is of size $k$ and for each edge $xy\in M_i$, add the edge $xyz_i$ to $H_1$ (for each $i \in \{1,2, \ldots, k-2\}$). Finally, add the two edges $a'bc$ and $c'ab$ to $H_1$.
\end{construction}

\begin{construction}
\label{seondconstr}
Let $k \ge 3$ be an integer divisible by $3$. Let $X=\{x_1,x_2,\ldots ,x_k\}$,  $Y=\{y_1,y_2,\ldots ,y_k\}$, $Z=\{z_1,z_2,\ldots ,z_{k-2},a,b,c\}$ be three vertex-disjoint sets. Take a complete bipartite graph $B$ with parts $X$ and $Y$. Let $H_2$ be the hypergraph consisting of the edges described below:


Consider any $2$-factor $B_0$ of $B$, such that $B_0$ consists of cycles of length divisible by $6$. Color edges of $B_0$ with colors $a,b,c$ such that every $3$ consecutive edges have different colors. 
For each edge $uv\in B_o$, add $uva$ to $H_2$ if $uv$ has color $a$; add $uvb$ to $H_2$ if $uv$ has color $b$; add $uvc$ to $H_2$ if $uv$ has color $c$.


Now take an arbitrary decomposition of the ($(k-2)$-regular) graph $B\setminus B_0$ into $k-2$ perfect matchings $M_1,M_2,$ $\ldots,M_{k-2}$.
For each edge $xy\in M_i$ with $i \in \{1,2, \ldots, k-2\}$, add the edge $xyz_i$ to $H_2$. Finally, add the edge $abc$ to $H_2$. 
\end{construction}

\begin{construction}\label{smallconstruction1}

Let $k=3$. Let $B$ be a graph on vertex set $\{x_1,x_2,x_3,y_1,y_2,y_3\}$  with the edge set $\{x_1x_2,x_2x_3,x_3x_1,y_1y_2,y_2y_3,y_3y_1,x_1y_1,x_2y_2,x_3y_3\}$. 
Let $Z=\{a,b,c,v\}$.

Let $M_v=\{x_1y_1,x_2y_2,x_3y_3\}$. Decompose $B\setminus M_v$ into three matchings $M_a, M_b$ and $M_c$ such that each of them is of size two. 

Let $H_3$ be the hypergraph with the following $10$ edges:
$3$ hyperedges obtained by adding the vertex $v$ to the edges of $M_v$, and the
$6$ hyperedges obtained by adding the vertex $x$ to the edges of $M_x$ (for $x \in \{a,b,c\}$), and the hyperedge $abc$.
\end{construction}

\begin{construction}\label{smallconstruction2}

Let $k=3$.
Take two disjoint sets of vertices $\{x_1,x_2,x_3,y_1,y_2,y_3\}$ and $\{v,a,b,c\}$.
Define matchings  $M_a=\{y_1y_2,x_1x_2\}$, $M_b=\{x_2x_3\}$ and $M_c=\{y_1y_3,x_1x_3\}$. Let $M_v$ be either $\{x_1y_1,x_2y_2,x_3y_3\}$, or $\{x_1y_2,x_2y_1,x_3y_3\}$ or $\{x_1y_3,x_3y_1,x_2y_2\}$.

Let $H_4$ be the hypergraph with the $10$ edges given as follows:
The hyperedges obtained by adding the vertex $w$ to the edges of $M_w$ (for $w \in \{a,b,c,v\}$), and the hyperedges $aby_3, bcy_2$.
\end{construction}
 
 
\begin{remark}
Construction \ref{generalconstr} and Construction \ref{seondconstr} both provide many non-isomorphic extremal configurations for any fixed $k > 3$.

Also note that for $k >3$ all of the extremal hypergraphs are given by Construction \ref{generalconstr} and Construction \ref{seondconstr}. For $k =3$,  however, there are two additional extremal hypergraphs -- Construction \ref{smallconstruction1} and Construction \ref{smallconstruction2}.
\end{remark}


\noindent \textit{Notation.} For a hypergraph $H$, let $d(v)$ denote degree of a vertex $v$ in $H$. The link of a vertex $x$ is denoted by $L(x)$, $L(x)=\{uv \mid uvx\in E(H)\}$. Let $\partial H = \{xy \mid \{x, y\} \subseteq e \in E(H) \}$ denote the $2$-shadow of $H$. For $v\in V(H)$, let $N(v) = \{x \mid xv \in \partial H \}$ denote the neighborhood of $v$ in $H$. Let $S(v)=V\setminus N(v)$. Note that $S(v)$ contains $v$.

For a graph $G$, we sometimes also use $G$ to denote its edge-set. For example, for a matching $M$, we denote its edge-set also by $M$. For two graphs $G, G'$ on the same vertex set, $G\setminus G'$ denotes the graph $(V(G),E(G)\setminus E(G'))$. For a graph $G$ and a vertex $x \in V(G)$, the degree of $x$ in $G$ is denoted by $d_G(x)$. 

 Theorem \ref{Main_Result} is proved in Section \ref{sec2} and the above four constructions are proven to be sail-free in Section \ref{constructionsproof}.

\section{Proof of Theorem \ref{Main_Result}}
\label{sec2}

Suppose $H$ is a sail-free linear $3$-uniform hypergraph on $n=3k+1$ vertices with at least $k^2+1$ edges. We will show that $H$ has exactly $k^2+1$ hyperedges, and characterize all such hypergraphs. 

Notice that for any vertex $v$ of $H$, there is no hyperedge of $H$ contained in $N(v)$, because otherwise, we have a sail in $H$, a contradiction. We will use this fact throughout the proof.

\begin{claim}
\label{maxdegreeisk}
The maximum degree in $H$ is equal to $k$.
\end{claim}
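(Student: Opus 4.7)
The plan is to bound the maximum degree $\Delta := \max_{v \in V(H)} d(v)$ from both sides and show $\Delta = k$. The lower bound $\Delta \ge k$ is immediate from double counting: if every vertex had degree at most $k-1$, then
\[
3|E(H)| \;=\; \sum_{v \in V(H)} d(v) \;\le\; (3k+1)(k-1) \;=\; 3k^2 - 2k - 1 \;<\; 3(k^2+1) \;\le\; 3|E(H)|,
\]
a contradiction.

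For the upper bound $\Delta \le k$, the plan is to argue by contradiction: assume some vertex $v$ has $d(v) = \Delta \ge k+1$. Since $H$ is linear, the link $L(v)$ is a matching of size $\Delta$ saturating $N(v)$, so $|N(v)| = 2\Delta$ and $|S(v) \setminus \{v\}| = 3k - 2\Delta$. The crucial use of the sail-free hypothesis (valid because $\Delta \ge 3$) is the observation that no hyperedge can lie entirely inside $N(v)$: if $\{x_1,x_2,x_3\} \subseteq N(v)$ were a hyperedge then, by linearity, the $x_i$ would lie in three distinct matching edges of $L(v)$, and the three corresponding hyperedges through $v$, together with $\{x_1,x_2,x_3\}$, would form a sail with apex $v$. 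Consequently, every hyperedge either contains $v$ or meets $S(v)\setminus\{v\}$, so double-counting incidences between $S(v)\setminus\{v\}$ and the $|E(H)|-d(v)$ hyperedges that avoid $v$ (noting that no edge through a vertex of $S(v)\setminus\{v\}$ contains $v$) yields
\[
(3k-2\Delta)\Delta \;\ge\; \sum_{u \in S(v)\setminus\{v\}} d(u) \;\ge\; |E(H)| - \Delta \;\ge\; k^2 + 1 - \Delta.
\]

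Rearranging produces the quadratic inequality $2\Delta^2 - (3k+1)\Delta + (k^2 + 1) \le 0$. Its larger real root is $\tfrac{3k+1 + \sqrt{(k-1)(k+7)}}{4}$, which is strictly less than $k+1$ because $\sqrt{k^2 + 6k - 7} < k + 3$; equivalently, direct substitution shows the quadratic equals $2$ at $\Delta = k+1$ and is positive for every integer $\Delta \ge k+1$. This contradicts $\Delta \ge k+1$, giving $\Delta \le k$, and combined with the lower bound we conclude $\Delta = k$. The principal obstacle --- and the only place the sail-free assumption is used --- is the step establishing that no hyperedge lies inside $N(v)$; once that is in hand, the remainder is a clean double-counting computation.
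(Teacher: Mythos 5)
Your proposal is correct and follows essentially the same route as the paper: the identical double-counting lower bound, the same key sail-free observation that no hyperedge lies inside $N(v)$, and the same resulting inequality $k^2+1\le(3k+1-2\Delta)\Delta$ (your version, $(3k-2\Delta)\Delta\ge k^2+1-\Delta$, is the same inequality with the $\Delta$ edges through $v$ accounted for separately). The only cosmetic difference is that you rule out $\Delta\ge k+1$ by locating the roots of the quadratic, whereas the paper notes the expression is decreasing for $\Delta\ge k$ and fails at $\Delta=k+1$.
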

\begin{proof}
Let us denote the maximum degree of $H$ by $\Delta$, and let $v$ be a vertex with degree $\Delta$. We will show $\Delta = k$, which would prove the claim. Firstly, notice that $\Delta \geq k$. Indeed, otherwise, the total number of edges in $H$ is at most $(3k+1)(k-1)/3<k^2+1$, contradicting our assumption. 
Since there is no hyperedge of $H$ which is contained in $N(v)$, every hyperedge of $H$ contains a vertex from $V(H)\setminus N(v)$. Thus $\abs{E(H)}\leq (3k+1-\abs{N(v)})\Delta$, so we have $$k^2+1\leq \abs{E(H)} \leq (3k+1-\abs{N(v)})\Delta = (3k+1-2 \Delta)\Delta.$$ It is easy to check that $(3k+1-2\Delta)\Delta$ is a decreasing function in $\Delta$ for $\Delta \geq k$, and the inequality is not satisfied for $\Delta = k+1$.  Therefore, $\Delta\leq k$, but as we noted before $\Delta \ge k$. This implies $\Delta=k$, as desired.
\end{proof}

\begin{definition}
Let $E_1(v)$ be the set of hyperedges of $H$ which have $2$ vertices in $N(v)$ and $1$ vertex in $S(v)$. Let $E_2(v)$ denote the set of hyperedges of $H$ with $1$ vertex in $N(v)$ and $2$ vertices in $S(v)$ and let $E_3(v)$ be the set of hyperedges contained in the set $S(v)$.
\end{definition}
Notice that, since there is no hyperedge of $H$ which is contained in $N(v)$ we have $|E(H)|=|E_1(v)|+|E_2(v)|+|E_3(v)|$.

Claim \ref{Deficiensy} states that for any vertex $x$, its degree $d(x) \le k$. So one may view $k - d(x)$ as the deficiency of $x$. This notion is crucial to our proof, and is made precise below.

\begin{definition}[Deficiency]
For a set $S\subseteq V$, we define the deficiency of the set $S$ as $\Def(S)=\sum_{x\in S}(k-d(x))=|S|\cdot k-\sum_{x\in S}d(x)$. 
\end{definition}

\begin{lemma}
    \label{Deficiensy}
The deficiency of the vertex set of $H$ is $$\Def(V(H))\leq k-3.$$ Moreover, for every $v\in V(H)$ with $d(v)=k$, we have $$\Def(S(v))\leq k-|E_2(v)|-2|E_3(v)|-1.$$
\end{lemma}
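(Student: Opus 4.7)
The plan is to derive both bounds from two very simple counting identities: a degree-sum identity for the whole hypergraph, and a more refined degree-sum identity for $S(v)$ obtained by partitioning the edges of $H$ according to how they meet $N(v)$ and $S(v)$.

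For the first inequality, I would start from the standard handshake identity
\[
3|E(H)| \;=\; \sum_{x \in V(H)} d(x) \;=\; |V(H)|\cdot k - \Def(V(H)) \;=\; (3k+1)k - \Def(V(H)).
\]
Using the hypothesis $|E(H)| \geq k^2+1$, this rearranges to
\[
\Def(V(H)) \;\leq\; (3k+1)k - 3(k^2+1) \;=\; k-3,
\]
which gives the first part.

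For the second inequality, I would fix $v$ with $d(v)=k$. Since $H$ is linear, the link $L(v)$ is a matching of size $k$ on the vertex set $N(v)$, so $|N(v)|=2k$ and $|S(v)| = (3k+1) - 2k = k+1$. Since no hyperedge of $H$ lies inside $N(v)$, every edge of $H$ falls into exactly one of $E_1(v), E_2(v), E_3(v)$. Now count the contribution of each edge to $\sum_{x \in S(v)} d(x)$: an edge in $E_i(v)$ has exactly $i$ vertices in $S(v)$, so
\[
\sum_{x \in S(v)} d(x) \;=\; |E_1(v)| + 2|E_2(v)| + 3|E_3(v)|.
\]
Hence
\[
\Def(S(v)) \;=\; (k+1)k - |E_1(v)| - 2|E_2(v)| - 3|E_3(v)|.
\]
Subtracting $k - |E_2(v)| - 2|E_3(v)| - 1$ from both sides, the desired inequality $\Def(S(v)) \leq k - |E_2(v)| - 2|E_3(v)| - 1$ becomes
\[
|E_1(v)| + |E_2(v)| + |E_3(v)| \;\geq\; k^2 + 1,
\]
and the left-hand side is exactly $|E(H)|$, so this follows from the hypothesis.

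There is no real obstacle here; the whole lemma is a bookkeeping exercise that converts the assumption $|E(H)|\ge k^2+1$ into the two stated deficiency bounds. The only place to be careful is verifying that $|N(v)|=2k$ (using linearity and $d(v)=k$) and observing that the edges of $L(v)$ themselves are counted as elements of $E_1(v)$ (each such edge contains $v \in S(v)$ and two vertices of $N(v)$), so no edges of $H$ are missed in the partition $E(H) = E_1(v) \cup E_2(v) \cup E_3(v)$.
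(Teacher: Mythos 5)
Your proposal is correct and follows essentially the same argument as the paper: both parts reduce to the degree-sum identities $\sum_{x}d(x)=3|E(H)|$ and $\sum_{x\in S(v)}d(x)=|E_1(v)|+2|E_2(v)|+3|E_3(v)|$, combined with $|E(H)|\ge k^2+1$. The bookkeeping, including the observation that $|N(v)|=2k$ by linearity and that every edge (in particular every edge through $v$) lands in one of $E_1(v),E_2(v),E_3(v)$, matches the paper's proof.
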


\begin{proof}
Let us first show $\Def(V(H))\leq k-3$. By our assumption that $\abs{E(H)}\geq k^2+1$ we have $\sum_{x\in V(H)}d(x)\geq 3k^2+3$, so $\Def(V(H))=|V(H)|\cdot k-\sum_{x\in V(H)}d(x)\leq 3k^2+k-3k^2-3=k-3$, proving the first part of the lemma.

Now, consider a vertex $v\in V(H)$ such that $d(v)=k$. We want to prove that $\Def(S(v))=k-|E_2(v)|-2|E_3(v)|-1$.
Since $\abs{N(v)}=2k$, we have $|S(v)|=k+1$. Therefore, $\Def(S(v))=k^2+k-\sum_{x\in S(v)}d(x)$, i.e., we have
\begin{equation}
\label{eq1}
   \sum_{x\in S(v)}d(x)= k^2+k-\Def(S(v)).
\end{equation}


Recall that \[|E(H)|=|E_1(v)|+|E_2(v)|+|E_3(v)|\] and by the definition of $E_1(v)$, $E_2(v)$ and $E_3(v)$, we have $$\sum_{x\in S(v)}d(x)=|E_1(v)|+2|E_2(v)|+3|E_3(v)|.$$
Combining the above two equations, we get
$$|E(H)|=\sum_{x\in S(v)}d(x)-|E_2(v)|-2|E_3(v)|.$$

Using the fact that $\abs{E(H)}\geq k^2+1$ we have
$$\sum_{x\in S(v)}d(x)-|E_2(v)|-2|E_3(v)|\geq k^2+1.$$
This inequality, together with \eqref{eq1}, shows that
$$k^2+k-\Def(S(v))-|E_2(v)|-2|E_3(v)|\geq k^2+1.$$
Rewriting this inequality finishes the proof of the lemma.
\end{proof}

\begin{definition}
\label{def3}
For each vertex $x\in S(v)$, let the number of hyperedges of $E_1(v)$ which are adjacent to $x$ be $d_1^v(x)$. Let the number of hyperedges in $E_2(v)$ which are adjacent to $x$ be $d_2^v(x)$ and let the number of hyperedges of $E_3(v)$ adjacent to $x$ be $d_3^v(x)$. 
\end{definition}

Even though $d_1^v(x)$, $d_2^v(x)$ and $d_3^v(x)$ depend on a choice of vertex $v$, for convenience we drop $v$ from the notation, when the choice of $v$ is clear. Observe that $d(x)=d_1(x)+d_2(x)+d_3(x)$ for each vertex $x\in S(v)$. It is worth noting, that $v$ is an isolated vertex in $(\partial H)[S(v)]$ and $d(v)=d_1(v)=k$.

We divide the rest of the proof into the two complementary cases.

\subsection{Case 1: There exists a vertex   \texorpdfstring{$v$ such that $d(v)=k$ and $E_3(v)\ne \varnothing$}{}}

Clearly in this case $k \ge 3$.

Fix the vertex $v$ mentioned in the statement of Case 1. Let $abc\in E_3(v)$. Then, by definition, $\{ a,b,c \}\subseteq S(v)$, moreover it is easy to see that $\{ a,b,c \}\subseteq S(v)\setminus\{v\}$. Recall that by Lemma \ref{Deficiensy}, the deficiency of the whole vertex set is at most $k-3$. Now we will show that in fact, the deficiency of $\{ a,b,c \}$ must be $k-3$ (so, $\Def(V(H) \setminus \{ a,b,c \}) = 0$).

\begin{claim}
    \label{linkkmatching}
We have $\Def(\{ a,b,c \})=k-3 =\Def(S(v))$. Moreover,
$E_2(v)=\emptyset$ and $E_3(v)=\{abc\}$.
\end{claim}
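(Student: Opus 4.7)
The plan is to sandwich $\Def(\{a,b,c\})$ between $k-3$ (from above) and $k-3$ (from below), forcing $\Def(S(v)) = k-3$; the information on $E_2(v)$ and $E_3(v)$ will then fall out of the inequality in Lemma~\ref{Deficiensy}.

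The upper bound is immediate. Since $abc \in E_3(v)$, we have $|E_3(v)| \geq 1$, and Lemma~\ref{Deficiensy} yields
\[
\Def(S(v)) \leq k - |E_2(v)| - 2|E_3(v)| - 1 \leq k-3.
\]
Because $\{a,b,c\} \subseteq S(v) \setminus \{v\}$ and each summand $k-d(u)$ is non-negative by Claim~\ref{maxdegreeisk}, we conclude $\Def(\{a,b,c\}) \leq \Def(S(v)) \leq k-3$.

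The heart of the proof is the matching lower bound $\Def(\{a,b,c\}) \geq k-3$, i.e.\ $d(a)+d(b)+d(c) \leq 2k+3$. The basic sail constraint here is that for any $w \notin \{a,b,c\}$ one cannot have $\{a,b,c\} \subseteq N(w)$; otherwise the three edges through $w$ meeting $a,b,c$, together with $abc$ as the base, form a sail with apex $w$. Combined with linearity, $L(a),L(b),L(c)$ are matchings containing $bc, ac, ab$ respectively, and the residual matchings $M_a = L(a)\setminus\{bc\}$, $M_b, M_c$ are pairwise edge-disjoint subsets of pairs on $V \setminus \{a,b,c\}$. The condition above says that the graph $G = M_a \cup M_b \cup M_c$ has maximum degree $2$, and a naive count gives only $d(a)+d(b)+d(c) \leq 3k+1$. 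The sharper bound $2k+3$ requires refined sail analyses with apices taken inside $\{a,b,c\}$ themselves, exploiting that $v$ is already isolated in $G$ (because $a,b,c \in S(v)$ forces $v \notin N(a) \cup N(b) \cup N(c)$) and that the $k$ edges through $v$ form a perfect matching of $N(v)$; this structure propagates to force many more vertices outside $\{a,b,c\}$ to contribute $0$ to $\sum_w |N(w) \cap \{a,b,c\}|$, closing the gap to $2k+3$.

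Once the lower bound is in hand, $\Def(\{a,b,c\}) = k-3$, and the chain $\Def(\{a,b,c\}) \leq \Def(S(v)) \leq k-3$ is tight, giving $\Def(S(v)) = k-3$. Substituting this back into Lemma~\ref{Deficiensy} yields $|E_2(v)| + 2|E_3(v)| \leq 2$, and together with $|E_3(v)| \geq 1$ this forces $|E_2(v)| = 0$ and $|E_3(v)| = 1$; hence $E_3(v) = \{abc\}$.

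The main obstacle is the refinement of the naive sail bound from $3k+1$ down to $2k+3$: the basic sail-at-$abc$ argument falls short by about $k-2$, and closing the gap is the delicate combinatorial part, requiring a careful argument that in addition to $v$, the remaining vertices of $S(v) \setminus \{a,b,c\}$ also contribute $0$ (rather than up to $2$) to the incidence count between $V\setminus\{a,b,c\}$ and $\{a,b,c\}$.
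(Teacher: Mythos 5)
Your skeleton matches the paper's: sandwich $\Def(\{a,b,c\})$ at $k-3$, conclude $\Def(S(v))=k-3$, and feed the tightness back into Lemma~\ref{Deficiensy} to get $|E_2(v)|+2|E_3(v)|\le 2$ and hence $E_2(v)=\emptyset$, $E_3(v)=\{abc\}$. The upper-bound half and the endgame are fine. But the entire content of the claim lives in the lower bound $d(a)+d(b)+d(c)\le 2k+3$, and there you have not given a proof — you describe a plan ("refined sail analyses\ldots this structure propagates\ldots closing the gap is the delicate combinatorial part") and then name the missing step as the main obstacle. Worse, the plan as stated is circular: to improve the naive incidence count $\sum_{w\notin\{a,b,c\}}|N(w)\cap\{a,b,c\}|\le 2(3k-2)$ down to $4k$ you propose to show that every vertex of $S(v)\setminus\{a,b,c\}$ contributes $0$; but a nonzero contribution from such a vertex $w$ comes exactly from a hyperedge of $E_2(v)\cup E_3(v)$ through $w$ and one of $a,b,c$, so "contribution $0$" is essentially equivalent to $E_2(v)=\emptyset$ and $E_3(v)=\{abc\}$ — the conclusion you are trying to reach.

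The paper escapes this circularity by never bounding $d(a)+d(b)+d(c)$ by $2k+3$ outright. It counts only the shadow edges $X$ between $N(v)$ and $\{a,b,c\}$: each $x\in N(v)$ sends at most two such edges (three would put $abc$ inside $N(x)$, a sail), so $|X|\le 4k$; on the other hand each $E_1(v)$-hyperedge through $a$, $b$ or $c$ contributes \emph{two} edges to $X$ while each $E_2(v)$-hyperedge contributes one, giving $2\bigl(d_1(a)+d_1(b)+d_1(c)\bigr)+d_2(a)+d_2(b)+d_2(c)\le 4k$. This weighted count, together with $d_3(a)+d_3(b)+d_3(c)\le|E_3(v)|+2$, yields only $\Def(\{a,b,c\})\ge k-\tfrac{|E_2(v)|}{2}-|E_3(v)|-2$ — a bound that still involves $|E_2(v)|$ and $|E_3(v)|$. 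The point is that Lemma~\ref{Deficiensy} bounds $\Def(S(v))$ from above with coefficient $1$ on $|E_2(v)|$ while the lower bound has coefficient $\tfrac12$, and comparing the two forces $\tfrac{|E_2(v)|}{2}+|E_3(v)|\le 1$. You need some version of this two-sided, $E_2$-aware bookkeeping; as written, your lower bound is a genuine gap.
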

\begin{proof}

Let $X=\{xy\in \partial H |x\in N(v), y\in \{a,b,c\}\}$ be the set of edges of $\partial H$, between sets $N(v)$ and $\{a,b,c\}$.
It is easy to see that $\abs{X}=2(d_1(a)+d_1(b)+d_1(c))+d_2(a)+d_2(b)+d_2(c)$. Also, for each vertex $x\in N(v)$, at most two of the edges $xa,xb,xc$ can belong to $X$, otherwise there would be a hyperedge $abc$ in a $N(x)$, which means we have a sail in $H$, a contradiction. This implies that $\abs X \leq 2\abs{N(v)}=4k$.






Therefore, we have 
\begin{equation}
 \label{nabc}
    2(d_1(a)+d_1(b)+d_1(c))+d_2(a)+d_2(b)+d_2(c)\leq 4k.
\end{equation}
Rewriting, we get,
$$d_1(a)+d_1(b)+d_1(c)+d_2(a)+d_2(b)+d_2(c)\leq 2k+\frac{d_2(a)+d_2(b)+d_2(c)}{2}.$$
Notice that $d_3(a)+d_3(b)+d_3(c)\leq |E_3(v)|+2$.
Combining this inequality with the previous inequality, we get
$$d(a)+d(b)+d(c)\leq 2k+\frac{d_2(a)+d_2(b)+d_2(c)}{2}+|E_3(v)|+2.$$ Therefore, using that $\Def(\{ a,b,c \}) = 3k - d(a)-d(b)-d(c)$, we get
$$\Def(\{ a,b,c \})\geq k-\frac{d_2(a)+d_2(b)+d_2(c)}{2}-|E_3(v)|-2 \geq k-\frac{|E_2(v)|}{2}-|E_3(v)|-2.$$

By Lemma \ref{Deficiensy},
\begin{equation}
\label{long3}
    k-|E_2(v)|-2|E_3(v)|-1\geq \Def(S(v))\geq \Def(\{ a,b,c \})\geq k-\frac{|E_2(v)|}{2}-|E_3(v)|-2.
\end{equation}
Rewriting, we get,
\begin{equation}
\label{eqholds}
  \frac{|E_2(v)|}{2}+|E_3(v)| \leq 1.
\end{equation}
Since we assumed $|E_3(v)|\geq 1$,  it follows that $|E_3(v)|=1$ and $|E_2(v)|=0$. Moreover, \eqref{eqholds} has to hold with equality. This means that all of the inequalities in \eqref{long3} have to hold with equality. So  $\Def(S(v))= \Def(\{ a,b,c \})=k-3$ and $|E_3(v)|=1$, which clearly implies that $E_3(v)=\{abc\}$, finishing the proof of the claim. 
\end{proof}


For each vertex $x\in S(v)$, let us define a matching $M_x := \{ wu\in L(x)|w,u\in N(v)\}$. Let $M_{abc}$ be a graph whose edge-set is $M_a\cup M_b \cup M_c$. 
By Claim \ref{linkkmatching}, we have $\Def(\{a,b,c\})=\Def(S(v))=k-3$, so $2k+3=d(a)+d(b)+d(c)$. Moreover, Claim \ref{linkkmatching} also asserts $\abs{E_2(v)}=0$ and $E_3(v)=\{abc\}$, which implies that $2k+3=d(a)+d(b)+d(c)=d_1(a)+d_1(b)+d_1(c)+3$, so 
\begin{equation}\label{mabc}
  |M_{abc}|=d_1(a)+d_1(b)+d_1(c)=2k.  
\end{equation}

\begin{claim}
\label{3cycles}
  $M_{abc}$ is a disjoint union of cycles whose lengths are divisible by $3$. Moreover, each cycle of $M_{abc}$ is a cyclic sequence of edges of $M_a,M_b,M_c,\ldots, M_a,M_b,M_c$.
\end{claim}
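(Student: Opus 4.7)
The plan is to split the claim into two parts and prove each by a short sail-exchange argument. Part (i) shows that $M_{abc}$ is $2$-regular on $N(v)$, so it decomposes into disjoint cycles; part (ii) shows that along any such cycle the three matchings $M_a, M_b, M_c$ alternate in the cyclic pattern $a,b,c,a,b,c,\ldots$, which immediately yields divisibility of the length by $3$.

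\emph{Part (i): $M_{abc}$ is $2$-regular on $N(v)$.} First, $M_a, M_b, M_c$ are pairwise edge-disjoint, because a common edge $uw \in M_x \cap M_y$ would give hyperedges $uwx$ and $uwy$ sharing two vertices, contradicting linearity; so $M_{abc}$ is a simple graph. Next, no vertex $u \in N(v)$ can be incident to edges of all three matchings: otherwise $H$ would contain hyperedges $uw_aa$, $uw_bb$, $uw_cc$ with $w_a, w_b, w_c \in N(v)$, which together with $abc \in E_3(v)$ form a sail centered at $u$ --- exactly the observation driving the proof of Claim~\ref{linkkmatching}. Hence the maximum degree of $M_{abc}$ on $N(v)$ is at most $2$, and combining this with the identity $|M_{abc}| = 2k = |N(v)|$ from \eqref{mabc}, the total degree $2|M_{abc}| = 4k = 2|N(v)|$ forces every vertex to have degree exactly $2$. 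Therefore $M_{abc}$ is $2$-regular and is a disjoint union of cycles.

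\emph{Part (ii): the cyclic $abc$-pattern.} Fix a cycle $u_1 u_2 \cdots u_\ell u_1$ of $M_{abc}$ and let $c_i \in \{a, b, c\}$ be the color (matching) of $e_i = u_i u_{i+1}$. Because each $M_x$ is a matching, consecutive edges along the cycle lie in different matchings, i.e.\ $c_{i-1} \neq c_i$ for all $i$ (cyclically). I claim moreover that \emph{any three consecutive colors are pairwise distinct.} For $\ell = 3$ this is automatic since all three edges pairwise share a vertex. So assume $\ell \geq 4$ and suppose for contradiction that $c_{i-1} = c_{i+1} = x$ and $c_i = y$ for some $x \neq y$ in $\{a, b, c\}$. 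Then $H$ contains the four hyperedges $u_{i-1} u_i x$, $u_{i+1} u_{i+2} x$, $u_i u_{i+1} y$ and $abc$; the first, second and fourth all contain $x$, while the edge $g := u_i u_{i+1} y$ meets them at $u_i$, $u_{i+1}$ and $y$ respectively. These three intersection points are pairwise distinct (since $u_i, u_{i+1} \in N(v)$ while $y \in S(v)$) and all different from $x \in S(v)$, so $\{u_{i-1} u_i x,\; u_{i+1} u_{i+2} x,\; abc,\; g\}$ forms a sail centered at $x$ --- contradicting that $H$ is sail-free. Hence $c_{i-1}, c_i, c_{i+1}$ are pairwise distinct for every $i$, forcing $c_1, c_2, c_3, \ldots$ to cycle through $a, b, c$ in some consistent order; in particular $3 \mid \ell$ and the cyclic color sequence is exactly $M_a, M_b, M_c, M_a, M_b, M_c, \ldots$, as claimed.

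\emph{Main obstacle.} The whole argument is a short variation on the ``sail through $abc$'' trick already used in Claim~\ref{linkkmatching}, so no essentially new idea is needed. The only point requiring a little care is, when exhibiting the forbidden sail, to verify that the four hyperedges are distinct and that the three intersection points $u_i, u_{i+1}, y$ are distinct from one another and from the sail-center $x$; both facts follow immediately from $u_i, u_{i+1} \in N(v)$, $a, b, c \in S(v)$ and $x \neq y$, together with $u_{i-1} \neq u_{i+2}$ (which is exactly where the assumption $\ell \geq 4$ is used).
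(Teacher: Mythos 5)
Your proposal is correct and follows essentially the same route as the paper: degree at most $2$ in $M_{abc}$ via the sail through $abc$ at a vertex of $N(v)$ meeting all three matchings, then $2$-regularity from $|M_{abc}|=2k$, and finally ruling out $c_{i-1}=c_{i+1}$ among three consecutive edges by exhibiting the sail centered at the repeated color (the paper phrases this as the middle hyperedge lying inside $N(a)$, which is the same configuration you write out explicitly). No substantive differences.
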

\begin{proof}
 Let $x\in N(v)$, if $x\in V(M_a)\cup V(M_b)\cup V(M_c)$ then the hyperedge $abc\subseteq N(x)$, a contradiction. So the maximum degree in the graph $M_{abc}$ is $2$, but by \eqref{mabc}, $\abs{M_{abc}}=2k=\abs{N(v)}$ which means that every vertex must have degree exactly $2$. So $M_{abc}$ is disjoint union of cycles. 
 Let $u_0u_1$, $u_1u_2$ and $u_2u_3$ be three consecutive edges of any cycle in $M_{abc}$ ($u_0$ might be the same as $u_3$). We claim that the matchings $M_a$, $M_b$ and $M_c$ each contain exactly one of the edges $u_0u_1$, $u_1u_2$, $u_2u_3$. It suffices to show that two of the edges $u_0u_1$, $u_1u_2$, $u_2u_3$ cannot be in the same matching. Let us assume for a contradiction this is not true. Without loss of generality, we can assume that two of these edges are in $M_a$ and because of the linearity of $H$, these two edges have to be $u_0u_1$ and $u_2u_3$. If $u_1u_2\in M_b$ then it is easy to see that the hyperedge $u_1u_2b\subseteq N(a)$ and if $u_1u_2\in M_c$ then $u_1u_2c\subseteq N(a)$, a contradiction. So for every $3$ consecutive edges in any cycle of $M_{abc}$, we must have one in $M_a$, one in $M_b$ and one in $M_c$, which trivially implies the claim.
\end{proof}

Let $B=(\partial H)[N(v)]$. Since there is no hyperedge of $H$ completely contained in $N(v)$ we have $B=\cup_{x\in S(v)}M_x$. 
By Claim \ref{linkkmatching}, $\Def(S(v)\setminus \{ a,b,c \})=0$, so for each $x\in S(v)\setminus \{a,b,c \}$ we have that $M_x=L(x)$ is a matching of size $k$ in $B\setminus M_{abc}$. Therefore $B\setminus M_{abc}$ is a union of $k-2$ matchings of size $k$. Moreover, by Claim \ref{3cycles}, $M_{abc}$ is $2$-regular graph, so $B$ is a $k$-regular graph.

We separate the cases depending on whether $B$ has a triangle or not. 

\begin{claim}
\label{notriangle0}
    If $xyz$ is a triangle in $B$, then $xyz$ is a triangle in $M_{abc}$.
\end{claim}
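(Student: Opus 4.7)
The plan is to assume for contradiction that some triangle $xyz$ in $B$ is not a triangle in $M_{abc}$, and then build a sail in $H$ using the perfect-matching structure of the links $L(w)$ for $w \in S(v) \setminus \{a,b,c\}$.

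First I would record the basic structure: since no hyperedge lies inside $N(v)$, each edge $xy$, $yz$, $xz$ of $B$ extends to a hyperedge $xyw_1$, $yzw_2$, $xzw_3$ with the third vertex in $S(v)$. The vertices $w_1, w_2, w_3$ must be pairwise distinct, for if (say) $w_1 = w_2$, then $xy, yz$ would both lie in $M_{w_1}$, contradicting that $M_{w_1}$ is a matching. By the contrapositive hypothesis at least one $w_i \notin \{a,b,c\}$; after relabelling call it $w_1 =: d$.

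Next I would use Claim \ref{linkkmatching} to conclude that $L(d)$ is a perfect matching of $N(v)$. Indeed, since the deficiency of $S(v)$ is concentrated on $\{a,b,c\}$, we have $d(d)=k$; and since $|E_2(v)|=0$ and $E_3(v)=\{abc\}$ with $d \notin \{a,b,c\}$, every hyperedge through $d$ lies in $E_1(v)$, so $L(d)=M_d$ is a matching of size $k$ in the $2k$-vertex set $N(v)$, hence perfect. The edge of this matching through $z$ has the form $zz'$ with $z' \in N(v) \setminus \{x,y\}$ (since $xy$ is another edge of $L(d)$), giving a hyperedge $zz'd \in E(H)$. Moreover $w_2, w_3 \neq d$, since otherwise $xz$ or $yz$ would be in $M_d$ alongside $xy$, violating the matching property.

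Finally I would exhibit the sail: the four hyperedges
\[
f_1 = xzw_2,\qquad f_2 = yzw_3,\qquad f_3 = zz'd,\qquad g = xyd
\]
form a copy of $F^3$ centered at $z$. The three fan edges all contain $z$ and pairwise meet only in $\{z\}$, because $x, y, z'$ are distinct vertices of $N(v)$, $w_2, w_3, d$ are distinct vertices of $S(v)$, and $N(v) \cap S(v) = \emptyset$. The base edge $g = xyd$ avoids $z$ (as $z \neq x, y$ and $z \in N(v)$ while $d \in S(v)$) and meets $f_1, f_2, f_3$ in the single vertices $x, y, d$ respectively. This contradicts the sail-freeness of $H$, so every $w_i$ must lie in $\{a,b,c\}$ and $xyz$ is a triangle in $M_{abc}$. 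The main obstacle is simply spotting the right sail; once one exploits that $L(d)$ being a perfect matching forces $zz'd \in H$ and that $xy \in M_d$ already supplies the base edge $g = xyd$, the four hyperedges assemble naturally at the vertex $z$.
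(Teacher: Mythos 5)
Your proof is correct and follows essentially the same route as the paper: pick an edge of the triangle, say $xy$, not in $M_{abc}$, use that its witness $d\in S(v)\setminus\{a,b,c\}$ has $M_d=L(d)$ a perfect matching of $N(v)$ to get $dz\in\partial H$, and conclude that the hyperedge $xyd$ lies inside $N(z)$ --- you merely unpack the resulting sail explicitly where the paper invokes its standing fact that no hyperedge can be contained in a neighborhood. (Only a cosmetic slip: with your labeling $xyw_1$, $yzw_2$, $xzw_3$, the fan edges at $z$ should be $xzw_3$ and $yzw_2$, not $xzw_2$ and $yzw_3$.)
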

\begin{proof}
Let us assume for a contradiction that $\{xy,yz,zx\}\not \subseteq M_{abc}$. Without loss of generality we can assume that $xy\notin M_{abc}$.
There exists a vertex $w\in S(v)\setminus \{ a,b,c \}$ such that $wxy\in E(H)$. Since $V(M_w)=N(v)$, we have $wz\in \partial H$. So the hyperedge $xyw$ is inside $N(z)$, a contradiction. 
\end{proof}

Suppose there is a triangle $xyz$ in $B$. By Claim \ref{notriangle0}, $xyz$ is a triangle in $M_{abc}$, so without loss of generality we may assume that $xy\in M_a,yz\in M_b,zx\in M_c$. Recall that by Claim \ref{3cycles}, $M_{abc}$ is a vertex-disjoint union of cycles, so $xyz$ is one of those cycles. In particular, $xyz$ is the only triangle of $M_{abc}$ containing the edges $xy, yz$ or $zx$. If there is a triangle in $B$ sharing an edge with $xyz$, it would have to be in $M_{abc}$ as well, by Claim \ref{notriangle0}, which is impossible. So $xyz$ is the only triangle of $B$ containing the edges $xy, yz$ or $zx$. Therefore, for each vertex $u\in N(v)\setminus \{x,y,z\}$ at most $1$ of the edges $ux,uy,uz$ belongs to $B$. Therefore the sum of degrees of $x,y$ and $z$ in the graph $B$ is at most $2k-3+6=2k+3$ (where the $6$ comes from the edges $xy, yz, zx$). But $B$ is a $k$-regular graph, so $2k+3\geq3k$, i.e., $k\leq 3$, so $k=3$. 
In this case, $B$ is a $3$-regular graph on $6$ vertices containing a triangle. Then one can easily check that $B$ must be a graph consisting of two triangles with a matching between them; let $B = \{x_1x_2,x_2x_3,x_3x_1,y_1y_2,y_2y_3,y_3y_1,x_1y_1,x_2y_2,x_3y_3\}$ where  $N(v)=\{x_1,x_2,x_3,y_1,y_2,y_3\}$. By Claim \ref{notriangle0}, both triangles of $B$ are contained in $M_a \cup M_b \cup M_c$, so it is easy to see that $H$ has to be derived by means of Construction \ref{smallconstruction1}.

Now suppose $B$ is triangle-free. Let $xy\in B$ and let $X$ and $Y$ be the neighborhoods of $x$ and $y$ in $B$, respectively. Since $B$ is triangle-free and $k$-regular, $X$ and $Y$ are vertex-disjoint independent sets of size $k$. But as $B$ has $2k$ vertices, this means that $B$ is bipartite with parts $X$ and $Y$. Moreover, as $B$ is $k$-regular, it must be a complete bipartite graph $K_{k,k}$. So using Claim \ref{3cycles} it is easy to see that the hypergraph $H$ can be constructed by the means of Construction \ref{seondconstr}, where $B_0=M_{abc}$ and $\{M_1,M_2,\ldots M_{k-2}\}=\{M_x\mid x\in S(v)\setminus\{a,b,c\}\}$.

\subsection{Case 2: \texorpdfstring{For every vertex $v$ such that $d(v)=k$, $E_3(v)=\varnothing$}{}}
Among the vertices with $d(v) = k$ (which exist by Claim \ref{maxdegreeisk}), let us fix a vertex $v \in V(H)$ such that $\Def(S(v))$ is maximal.

\begin{claim}
    \label{smallclaim}
We have $|E_2(v)|\ge2$.
\end{claim}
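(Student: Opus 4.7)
The plan is to bound $|E_1(v)|$ above by $k^2$, combine this with the edge-count hypothesis and the Case 2 assumption $E_3(v)=\emptyset$ to force $|E_2(v)|\ge 1$, and then use a tightness argument to upgrade the bound to $|E_2(v)|\ge 2$.

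For the upper bound on $|E_1(v)|$, I would work with the induced graph $B:=(\partial H)[N(v)]$. By linearity every edge of $B$ comes from a unique hyperedge of $H$, and this hyperedge must have a vertex outside $N(v)$, for otherwise together with $v$ it would form a sail. Consequently it lies in $E_1(v)$, and conversely every $E_1(v)$-hyperedge contributes exactly one pair to $B$, setting up a bijection giving $|E_1(v)|=|E(B)|$. Since $d_B(u)\le d(u)\le k$ for every $u\in N(v)$ by Claim \ref{maxdegreeisk}, summing and dividing by $2$ yields $|E(B)|\le \tfrac{1}{2}\cdot 2k\cdot k=k^2$.

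Since $E_3(v)=\emptyset$, we have $|E(H)|=|E_1(v)|+|E_2(v)|\ge k^2+1$, and the bound $|E_1(v)|\le k^2$ already gives $|E_2(v)|\ge 1$. To rule out $|E_2(v)|=1$, I would argue by contradiction: equality throughout the chain would force $|E_1(v)|=k^2$ and hence $d_B(u)=k$ for every $u\in N(v)$. This means $d(u)=k$ (by Claim \ref{maxdegreeisk}), and every hyperedge incident to $u$ must lie in $E_1(v)$, since an $E_2(v)$-hyperedge through $u$ would push $d(u)$ strictly above $d_B(u)=k$. But the assumed unique hyperedge in $E_2(v)$ meets $N(v)$ at some vertex $u'$, contradicting what was just shown for $u'$. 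Therefore $|E_2(v)|\ge 2$.

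I do not anticipate a real obstacle; the crucial observation is the identification $|E_1(v)|=|E(B)|$, which is precisely where the sail-free hypothesis applied at $v$ is used, and the rest is a clean squeezing argument squeezing out the extra ``$+1$'' from $|E(H)|\ge k^2+1$.
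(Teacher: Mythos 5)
Your proof is correct and rests on the same double count over $N(v)$ that the paper uses: the identity $|E_1(v)|=|E(B)|$ together with $d_B(u)\le d(u)\le k$ is exactly the paper's inequality $\sum_{x\in N(v)}d(x)=2|E_1(v)|+|E_2(v)|\le 2k^2$. The only difference is presentational: the paper extracts $|E_2(v)|\ge 2(k^2+1)-2k^2=2$ in one step, whereas you first get $|E_2(v)|\ge 1$ and then need the (valid) tightness analysis to rule out equality.
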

\begin{proof}
Since there is no hyperedge of $H$ contained in $N(v)$ and $E_3(v)=\emptyset$, $E(H)=E_1(v)\cup E_2(v)$. Notice that 
$\sum_{x\in N(v)}d(x) = 2\abs{E_1(v)}+\abs{E_2(v)}= 2\abs{E(H)} - \abs{E_2(v)}$. So we have
$$2k^2=k\cdot \abs{N(v)}\geq \sum_{x\in N(v)}d(x)=2\abs{E(H)}-\abs{E_2(v)}\geq 2(k^2+1)-\abs{E_2(v)}.$$
Therefore $|E_2(v)|\geq 2$, proving the claim.

\end{proof}

Like before, let $d_1(x)=d_1^v(x)$, $d_2(x)=d_2^v(x)$ and $d_3(x)=d_3^v(x)$.
\begin{definition}
For $x\in S(v)$, let $M_x=\{yz|xyz\in E_1(v)\}$(Notice that $|M_x|=d_1(x)$). 
\end{definition}

A star is a set of edges having a common vertex. 
We plan to show that the edge set of $(\partial H)[S(v)]$ is a star.
First we will prove a small claim, which will be used throughout the proof.

\begin{claim}
    \label{newsmallclaim}
    Let $x,y\in S(v)$ and $xy \in E(\partial H)$. There is no path of  length $3$ in the graph $M_x\cup M_y$ and $d_1(x)+d_1(y)\leq \lfloor{\frac{4k-2}3}\rfloor$.
\end{claim}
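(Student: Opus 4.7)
The plan is to use the Case~2 hypothesis $E_3(v)=\varnothing$ to locate a hyperedge of the form $xyz$ with $z\in N(v)$, and then use this hyperedge as the tip of a sail centered at $x$ whenever $M_x\cup M_y$ is too dense. First I record some preliminaries: since $xy\in E(\partial H)$ and $E_3(v)=\varnothing$, the hyperedge containing $\{x,y\}$ belongs to $E_2(v)$, so there exists $z\in N(v)$ with $xyz\in E(H)$. Also, $M_x$ and $M_y$ are matchings inside $N(v)$ by definition, and $M_x\cap M_y=\varnothing$ because a common edge would force two distinct hyperedges (through $x$ and through $y$) to share two vertices, violating linearity.

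For the no-path-of-length-$3$ assertion, suppose $p_1p_2p_3p_4$ is such a path in $M_x\cup M_y$. Since each of $M_x,M_y$ is a matching, consecutive edges lie in different matchings, so after swapping $x$ and $y$ if necessary I may take $p_1p_2,p_3p_4\in M_x$ and $p_2p_3\in M_y$. The three hyperedges $xp_1p_2$, $xp_3p_4$, and $xyz$ are pairwise distinct and share only the vertex $x$: distinctness follows from linearity together with the facts $y\in S(v)$ while the $p_i$ lie in $N(v)$, and $\{p_1,p_2\}\cap\{p_3,p_4\}=\varnothing$. The hyperedge $yp_2p_3\in E(H)$ then meets these three at the pairwise distinct vertices $y,p_2,p_3$, each different from $x$. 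This is a sail centered at $x$, contradicting that $H$ is sail-free.

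For the numerical bound, I will count edges in $G:=M_x\cup M_y$. Note that $z$ is isolated in $G$: any edge $zw\in M_x$ would produce the hyperedge $xzw$ sharing $\{x,z\}$ with $xyz$, violating linearity, and the symmetric argument rules out $z\in V(M_y)$. Hence $G$ is a simple graph on at most $2k-1$ vertices with maximum degree at most $2$. Combined with the first part, every component of $G$ must be a path on at most three vertices: cycles of even length $\ge 4$ contain a path of length $3$, while odd cycles cannot occur in a union of two matchings. A graph on $m$ vertices whose components are paths of length at most $2$ has at most $m-\lceil m/3\rceil=\lfloor 2m/3\rfloor$ edges, so applying this with $m=2k-1$ gives
\[
d_1(x)+d_1(y)=|M_x|+|M_y|=|M_x\cup M_y|\le\left\lfloor\tfrac{4k-2}{3}\right\rfloor.
\]

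The main obstacle I anticipate is orchestrating the sail in the first step correctly: the alternating pattern forced on the $3$-path supplies two ``through-$x$'' hyperedges $xp_1p_2$ and $xp_3p_4$ together with the sail edge $yp_2p_3$, while the Case~2 hyperedge $xyz$ provides the crucial third hyperedge through $x$. Once these four hyperedges are in hand, sail-freeness and linearity take care of the rest, and the counting argument in the second part is then a routine degree-and-components calculation.
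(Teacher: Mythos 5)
Your proof is correct and follows essentially the same route as the paper: the alternation of the $3$-path between the two matchings yields a sail, and the bound follows from $z$ being isolated, the maximum degree being at most $2$, and the absence of $3$-paths forcing all components of $M_x\cup M_y$ to be paths with at most two edges. The only (cosmetic) difference is that you center the sail at $x$ using the hyperedge $xyz$ as the third fan edge, whereas the paper centers it at $y$ by observing that the hyperedge through the middle edge of the path lies inside $N(y)$.
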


\begin{proof}
Let us assume $M_x\cup M_y$ contains a $3$-path, i.e., there are vertices $u_0,u_1,u_2,u_3\in N(v)$  such that $u_0u_1,u_1u_2,u_2u_3\in E(\partial H)$. Without loss of generality, let us assume $u_1u_2\in M_x$, i.e., $xu_1u_2\in E(H)$. By linearity of $H$, $u_0u_1,u_2u_3\in M_y$, so $u_1y, u_2y\in E(\partial H)$. Moreover, we assumed $xy \in E(\partial H)$, so the hyperedge $xu_1u_2 \in E(H)$  is in the neighborhood of $y$,  giving us a sail in $H$, a contradiction, proving the  first part of the claim.

 Now, let $z$ be a vertex such that $xyz\in E(H)$. Then as $E_3(v) =\emptyset$, $z \in N(v)$. By linearity of $H$ it is easy to see that $z\notin V(M_x)\cup V(M_y)$, so combining this with the fact that $M_x \cup M_y$ does not contain a $3$-path, we have $d_1(x)+d_1(y)=\abs{M_x \cup M_y}\leq \lfloor{\frac{2}3 (2k-1)}\rfloor$.
\end{proof}

\begin{claim}
    \label{2path}
Let $a,b,c\in S(v)$ and $ab,bc\in E(\partial{H})$. Then $ac\notin E(\partial H)$ and $d_2(a)+d_2(b)+d_2(c)=\abs{E_2(v)}+2$. Moreover, $\Def(S(v))=\Def(\{a,b,c\})=k-\abs{E_2(v)}-1$.
\end{claim}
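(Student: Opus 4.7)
Since $E_3(v)=\emptyset$ and $ab,bc\in\partial H$, there exist hyperedges $abx,bcy\in E_2(v)$ with $x,y\in N(v)$, and $x\ne y$ by linearity. My plan is to establish the three assertions of the claim in order.

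For the first assertion $ac\notin\partial H$, I would argue by contradiction. Suppose $acz\in E(H)$ with $z\in N(v)$; by linearity $x,y,z$ are pairwise distinct. A direct sail construction rules out $ay,bz,cx\in\partial H$: for instance, if $ay\in\partial H$ then some hyperedge $ay\alpha$ exists, and the three hyperedges $abx,acz,ay\alpha$ pairwise intersect only in $a$ by linearity, while $bcy$ meets them at $b,c,y$ respectively---a sail, contradiction. The other two non-edges follow by the symmetric argument (swapping the roles of $a,b,c$ and $x,y,z$). For the final contradiction from $ac\in\partial H$, I would apply Claim~\ref{newsmallclaim} to each of the three pairs $(a,b),(b,c),(a,c)$ and sum to obtain $d_1(a)+d_1(b)+d_1(c)\le 2k-1$; combining with Lemma~\ref{Deficiensy} (which gives $\Def(\{a,b,c\})\le\Def(S(v))\le k-|E_2(v)|-1$ since $E_3(v)=\emptyset$), this forces $d_2(a)+d_2(b)+d_2(c)\ge|E_2(v)|+2$. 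The main obstacle is then to exhibit an actual sail from the resulting tight configuration: this should come from a more delicate argument using either the perfect matching $L(v)$ or a suitably chosen $E_1$-hyperedge through one of $a,b,c$, together with the established non-edges $ay,bz,cx\notin\partial H$.

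For the second assertion $d_2(a)+d_2(b)+d_2(c)=|E_2(v)|+2$: with $ac\notin\partial H$, exactly two $E_2(v)$-edges have both $S(v)$-vertices in $\{a,b,c\}$, namely $abx$ and $bcy$. Writing $n_i$ for the number of $E_2(v)$-edges with exactly $i$ vertices in $\{a,b,c\}$, we have $n_2=2$, $|E_2(v)|=n_0+n_1+2$, and $d_2(a)+d_2(b)+d_2(c)=n_1+2n_2=|E_2(v)|+2-n_0\le|E_2(v)|+2$. For the reverse direction, Claim~\ref{newsmallclaim} applied to the two known pairs $(a,b),(b,c)$, combined with Lemma~\ref{Deficiensy}, yields a bound $d_1(a)+d_1(b)+d_1(c)\le 2k-1$ tight enough to force $d_2(a)+d_2(b)+d_2(c)\ge|E_2(v)|+2$, hence equality and $n_0=0$.

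Finally, combining $d_1(a)+d_1(b)+d_1(c)=2k-1$ and $d_2(a)+d_2(b)+d_2(c)=|E_2(v)|+2$, I compute
\[
\Def(\{a,b,c\})=3k-(2k-1)-(|E_2(v)|+2)=k-|E_2(v)|-1.
\]
Combined with $\Def(\{a,b,c\})\le\Def(S(v))\le k-|E_2(v)|-1$ from Lemma~\ref{Deficiensy}, equality holds throughout, giving $\Def(S(v))=\Def(\{a,b,c\})=k-|E_2(v)|-1$ and, as a byproduct, $\Def(S(v)\setminus\{a,b,c\})=0$. The hardest part will be closing out the sail construction in Step~1 once the easy non-edges and counting inequalities are in hand, since the counting alone yields only $n_0\le 1$ (not a direct contradiction) and an explicit sail has to be located in the detailed link structure around the triangle $\{abx,bcy,caz\}$.
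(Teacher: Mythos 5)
Your proposal has a genuine gap, and in fact two instances of the same gap. The crucial missing idea is the paper's structural analysis of the graph $M_{abc}=M_a\cup M_b\cup M_c$: using Claim~\ref{newsmallclaim} on the pairs $(a,b)$ and $(b,c)$ \emph{only}, one shows that every vertex $x\in N(v)\setminus\{a',c'\}$ of degree $3$ in $M_{abc}$ has, via its $M_b$-edge, a neighbour $b_0\in N(v)\setminus\{a',c'\}$ of degree $1$ in $M_{abc}$ (any second edge at $b_0$ would create a $3$-path in $M_a\cup M_b$ or in $M_b\cup M_c$); since $M_b$ is a matching this pairing is injective, so the number of degree-$3$ vertices is at most the number of degree-$1$ vertices and hence $\sum_x d_{M_{abc}}(x)\le 2\abs{V(M_{abc})}$. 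This is what yields $\abs{M_{abc}}\le 2k-3$ in the triangle case (where $a',b',c'\notin V(M_{abc})$) and $\abs{M_{abc}}\le 2k-1$ in general. Without it, both halves of your argument stall.

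Concretely: (i) In your first step the sail construction ruling out $ay,bz,cx\in\partial H$ is correct, but as you yourself note, summing Claim~\ref{newsmallclaim} over the three pairs gives only $d_1(a)+d_1(b)+d_1(c)\le 2k-1$, which leaves the slack $n_0\le 1$ and no contradiction; you do not supply the "more delicate argument" you promise. The paper instead gets $d_1(a)+d_1(b)+d_1(c)=\abs{M_{abc}}\le\abs{V(M_{abc})}\le 2k-3$ from the degree-pairing plus $a',b',c'\notin V(M_{abc})$, whence $\Def(\{a,b,c\})\ge k-\abs{E_2(v)}$ contradicts Lemma~\ref{Deficiensy} outright; no further sail needs to be located. (ii) In your second step, once $ac\notin\partial H$ is known you may no longer apply Claim~\ref{newsmallclaim} to the pair $(a,c)$, and the two remaining applications give only $d_1(a)+2d_1(b)+d_1(c)\le 2\lfloor(4k-2)/3\rfloor$, which does \emph{not} imply $d_1(a)+d_1(b)+d_1(c)\le 2k-1$ (for $k=3$ it permits $d_1(a)=d_1(c)=3$, $d_1(b)=0$, with sum $6>5$). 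The paper again obtains $\abs{M_{abc}}\le 2k-1$ from the degree-pairing together with $d_{M_{abc}}(a'),d_{M_{abc}}(c')\le 1$. Your counting of $d_2(a)+d_2(b)+d_2(c)$ via $n_0,n_1,n_2$ and the final deficiency computation are fine once these two $d_1$-bounds are actually supplied.
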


\begin{proof}
Let $M_{abc}=M_a\cup M_b \cup M_c$. For a vertex $x\in V(M_{abc})$, let $d'(x)$ denote the degree of $x$ in $M_{abc}$.
Since $M_a,M_b$ and $M_c$ are matchings,  $d'(x)\leq 3$ for every $x\in V(M_{abc})$. Since $E_3(v)=\emptyset$ there exist vertices $a',c'\in N(v)$ such that $abc',bca'\in E(H)$.

Let $x\in N(v)\setminus \{a',c'\}$ with $d'(x)=3$ and let $a_0,b_0,c_0\in N(v)$ be vertices such that $xa_0,xb_0,xc_0\in M_{abc}$. In particular, let $xa_0\in M_a,xb_0\in M_b$ and $xc_0\in M_c$.
We claim that $d'(b_0)=1$. 
Let us assume that $d'(b_0)>1$, then there is a vertex $y\not=x$ such that $b_0y\in M_{abc}$. Since $M_b$ is a matching, $b_0y\notin M_b$. If $b_0y\in M_a$, then $a_0xb_0y$ is a path of length $3$ in $M_a\cup M_b$ and if $b_0y\in M_c$, then $c_0xb_0y$ is a path of length $3$ in $M_b\cup M_c$, contradicting Claim \ref{newsmallclaim}. Therefore $d'(b_0)=1$. Moreover $bxb_0,abc'$ and $bca'\in E(H)$, so by linearity $b_0\notin \{a',c'\}$.
So for every $x\in N(v)\setminus \{a',c'\}$ with $d'(x)=3$ there exists a vertex $b_0\in N(v)\setminus \{a',c'\}$, which is adjacent to $x$ with $d'(b_0)=1$. Therefore we have, 
\begin{equation}
    \label{deg3deg1}
    \abs{\{x\in N(v)\setminus\{a',c'\}\mid d'(x)=3\}}\le \abs{\{x\in N(v)\setminus\{a',c'\}\mid d'(x)=1\}}.
\end{equation}

Now we will prove that $ac\notin E(\partial H)$. Suppose otherwise. Then there exists $b'\in N(v)$ such that $acb'\in E(H)$.
By linearity of $H$, $b'\notin V(M_a)\cup V(M_c$), and if $b'\in V(M_b)$, then the hyperedge $acb'$ would be in the neighborhood of $b$, resulting  in the existence of a sail in $H$. Therefore $b'\notin V(M_{abc})$. By symmetry we have $a',b',c'\notin V(M_{abc})$, so $\abs{V(M_{abc})}\leq \abs{N(v)}-3=2k-3$. By \eqref{deg3deg1}, the average degree in $M_{abc}$ is at most $2$, so $\abs{M_{abc}}\leq\abs{V(M_{abc})}\leq2k-3$.
By the definition of $M_{abc}$, $d_1(a)+d_1(b)+d_1(c)=|M_{abc}|\leq 2k-3$. Note that $d_2(a)+d_2(b)+d_2(c)\leq |E_2(v)|+3$, as the three hyperedges $abc',acb',a'bc\in E_2(v)$ are double counted by the sum $d_2(a)+d_2(b)+d_2(c)$. Adding up the two previous inequalities, we have $$d(a)+d(b)+d(c)\leq 2k-3+|E_2(v)|+3=2k+|E_2(v)|.$$  Then $$\Def(S(v))\geq \Def(\{a,b,c\})\geq k-|E_2(v)|,$$ contradicting Lemma \ref{Deficiensy} and proving that $ac\notin E(\partial H)$.

Now we will prove that $d_1(a)+d_1(b)+d_1(c)=2k-1$ by showing $\sum_{x\in N(v)}d'(x)\leq 4k-2$. 
By the linearity of $H$, $a'\notin V(M_{b})\cup V(M_{c})$, so $d'(a')\leq 1$. Similarly, we can show $d'(c')\leq 1$. 
Using \eqref{deg3deg1}, it follows that $\sum_{x\in N(v)\setminus \{a',c'\}}d'(x)\leq 4k-4$, so $\sum_{x\in N(v)}d'(x)\leq 4k-2$ which implies that $|M_{abc}|\leq 2k-1$. In other words,

\begin{equation}
    \label{2k-1}
d_1(a)+d_1(b)+d_1(c)=|M_{abc}|\leq 2k-1.
\end{equation}

Since $ac\notin E(\partial H)$,
\begin{equation}
    \label{d2e2}
    d_2(a)+d_2(b)+d_2(c)\leq |E_2(v)|+2
\end{equation}
Adding up the two previous inequalities, we have $$d(a)+d(b)+d(c)\leq 2k-1+|E_2(v)|+2=|E_2(v)|+2k+1.$$
Then $\Def(\{a,b,c\})\geq k-|E_2(v)|-1$. Combining this with Lemma \ref{Deficiensy}, we  get $\Def(S(v))=\Def(\{a,b,c\})=k-|E_2(v)|-1$. Moreover equality has to hold in every inequality which was used to derive the last equation, so equality holds in \eqref{d2e2}, proving the claim.
\end{proof}

\begin{claim}
    \label{notwomatching}
    There are no distinct vertices $a,b,u,w\in S(v)$ such that $ab,uw\in \partial H$.
\end{claim}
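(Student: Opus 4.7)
\medskip
\noindent\textbf{Proof plan.}
Suppose for contradiction that such distinct $a, b, u, w \in S(v)$ exist; since $v$ is isolated in $(\partial H)[S(v)]$, none of them equals $v$, and since $E_3(v) = \emptyset$ the hyperedges $abc'$ and $uww'$ of $H$ witnessing $ab, uw \in \partial H$ have $c', w' \in N(v)$. I would split on whether $\partial H$ contains any \emph{coupling edge} joining $\{a,b\}$ to $\{u,w\}$. If none exists, two applications of Claim \ref{newsmallclaim} give $d_1(a)+d_1(b)+d_1(u)+d_1(w) \le 2(4k-2)/3$. Because no hyperedge of $E_2(v)$ can contain one vertex from each pair (it would produce a coupling edge), also $d_2(a)+d_2(b)+d_2(u)+d_2(w) \le \abs{E_2(v)}+2$, the ``$+2$'' coming from $abc'$ and $uww'$ each contributing $2$; adding these and invoking Lemma \ref{Deficiensy} forces $(4k-2)/3 \le k-1$, a contradiction.

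Otherwise, assume without loss of generality $au\in\partial H$. Applying Claim \ref{2path} to the length-$2$ paths $b,a,u$ and $a,u,w$ forces $bu, aw\notin\partial H$, pins $d(b)=d(w)=k$, and yields $d_2(b)=d_2(w)=:t$ with $d_2(a)+d_2(u)=\abs{E_2(v)}+2-t$; the deficiency $\Def(S(v))=k-\abs{E_2(v)}-1$ concentrates on $\{a,u\}$, so $d(a)+d(u)=k+\abs{E_2(v)}+1$. Combining the three applicable instances of Claim \ref{newsmallclaim} (for $ab$, $au$, $uw$) with $d_1(b)=d_1(w)=k-t$ pins down $t=(k+1)/3$. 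In particular $(k+1)/3$ must be a non-negative integer, which, together with $\abs{S(v)}\ge 5$, restricts attention to $k \equiv 2 \pmod 3$ with $k\ge 5$.

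The main obstacle is closing off the coupled case, through a sub-split on whether $bw\in\partial H$. If $bw\notin\partial H$, then $t\ge 2$ forces $b$ and $w$ each to have $t-1$ further $\partial H$-neighbors $b_i, w_j$ in $S(v)\setminus\{v,a,b,u,w\}$; repeated applications of Claim \ref{2path} propagate the rigid equalities $d_2(b_i)=d_2(u)$ and $d_2(w_j)=d_2(a)$, which substituted into $\sum_{y\in S(v)\setminus\{v\}} d_2(y)=2\abs{E_2(v)}\le 2(k-1)$ produce the impossible inequality $k^2-7k+28\le 0$. If instead $bw\in\partial H$, symmetrically applying Claim \ref{2path} to the additional paths $a,b,w$ and $b,w,u$ forces $d(a)=d(u)=k$, hence $\Def(S(v))=0$, $\abs{E_2(v)}=k-1$, and $d_2(x)=(k+1)/3$ for every $x\in\{a,b,u,w\}$. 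Counting $E_2(v)$-hyperedges by how many vertices they have in $\{a,b,u,w\}$ against $\sum_{y\in S(v)\setminus\{v\}} d_2(y) = 2(k-1)$ eliminates all $k\ne 5$; the remaining case $k=5$ I would close by pivoting to the vertex $b$ (which satisfies $d(b)=k$ and $E_3(b)=\emptyset$ by the standing Case $2$ assumption), exploiting $\Def(S(b))\le\Def(S(v))=0$ by maximality of $\Def(S(v))$, and showing that $(\partial H)[S(b)]$ must itself contain an analogous $4$-cycle whose structural constraints cannot coexist with the already-forced structure on $(\partial H)[S(v)]$.
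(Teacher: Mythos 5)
Your overall architecture is sound and in places more refined than the paper's: the no-coupling-edge case and the three-edge sub-case ($bw\notin\partial H$) both close correctly, whereas the paper handles these two together more crudely, by observing that without a triangle or a $4$-cycle in $(\partial H)[\{a,b,u,w\}]$ at most three hyperedges of $E_2(v)$ are double counted, so $d_2(a)+d_2(b)+d_2(u)+d_2(w)\le\abs{E_2(v)}+3$, which combined with Claim \ref{newsmallclaim} and Lemma \ref{Deficiensy} already kills every $k\ge3$. The genuine gap is at the very end. In the $4$-cycle sub-case you correctly reduce to $k=5$, $\abs{E_2(v)}=4$, $\Def(S(v))=0$ --- exactly the bottleneck the paper also reaches --- but your proposed escape, ``pivot to $b$ and show $(\partial H)[S(b)]$ must itself contain an analogous $4$-cycle,'' is not justified and is unlikely to be salvageable in that form. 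Maximality only gives $\Def(S(b))\le\Def(S(v))=0$, hence every vertex of $S(b)$ has degree $k$; nothing forces $(\partial H)[S(b)]$ to contain two disjoint edges, let alone a $4$-cycle, so there is no ``analogous structure'' to play off against $S(v)$. The paper closes this case globally instead: from $\Def(S(v))=0$ it computes $\abs{E(H)}=\sum_{x\in S(v)}d(x)-\abs{E_2(v)}=30-4=26$, hence $\Def(V(H))=80-78=2>0$, so some vertex $x$ has $d(x)\le4$; since $\abs{V(H)\setminus(N(x)\cup\{x\})}\ge7>\Def(V(H))$, some vertex $v'$ outside $N(x)\cup\{x\}$ has degree $k$, and then $x\in S(v')$ gives $\Def(S(v'))\ge\Def(\{x\})\ge1>0=\Def(S(v))$, contradicting the choice of $v$. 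You need an argument of this kind (or some other concrete one) in place of your $k=5$ endgame.

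A second, smaller issue: in the sub-case $bw\notin\partial H$ you sum $d_2$ over the vertices $b_i$ and $w_j$ as if they were pairwise distinct, but a vertex of $S(v)$ may be adjacent in $\partial H$ to both $b$ and $w$, in which case it is counted twice against $\sum_{y\in S(v)}d_2(y)=2\abs{E_2(v)}$. This is repairable (any common neighbour $y$ satisfies $d_2(y)=d_2(a)=d_2(u)=(\abs{E_2(v)}+2-t)/2$, and redoing the count with up to $t-1$ coincidences still yields a contradiction), but as written the inequality is not justified.
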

\begin{proof}
Assume for a contradiction that $ab,uw\in \partial H$. First we will show that $(\partial H)[\{a,b,u,w\}]$ does not have a $4$-cycle.
Assume the contrary, that $(\partial H)[\{a,b,u,w\}]$ contains a $4$-cycle, then without loss of generality we may assume $bu,aw\in \partial H$. 
By Claim \ref{2path}, $d_2(a)+d_2(b)+d_2(u)=\abs{E_2(v)}+2$, and $au\notin(\partial H)[S(v)]$, which implies that every hyperedge of $E_2(v)$ contains one of the vertices $a,b$ or $u$. Similarly, $bw \notin (\partial H)[S(v)]$ and every hyperedge of $E_2(v)$ contains one of the vertices $b, u$ or $w$. Therefore $(\partial H)[S(v)] = \{ab,uw,bu,aw\}$, so $\abs{E_2(v)}=4$.
Claim \ref{2path} also asserts that
 $k-|E_2(v)|-1=\Def(S(v))=\Def(\{a,b,u\})=\Def(\{b,u,w\})=\Def(\{u,w,a\})=\Def(\{w,a,b\})$. It follows that $\Def(\{x\})=0$ for each $x\in S(v)$. Then we have $k-5=k-|E_2(v)|-1=\Def(S(v))=0$, i.e., $k=5$. In this case, $\abs{E(H)}=\sum_{x\in S(v)}d(x)-\abs{E_2(v)}=6\cdot 5-4=26$, so $\Def(V(H))=16\cdot 5-3\cdot 26=2$. Therefore, there exists a vertex $x\in V(H)$ such that $d(x)\le 4$. 
  Since $\abs{V(H)\setminus (N(x)\cup \{x\})}\ge 16-8-1=7$, there exists a vertex $v'\in V(H)\setminus (N(x)\cup\{x\})$ such that $d(v')=5=k$, otherwise $\Def(V(H))$ must be at least $7$ but $\Def(V(H)) = 2$, a contradiction. It is easy to see that $x\in S(v')$ therefore $\Def(S(v))<1\le \Def(\{x\})\le \Def(S(v'))$, which contradicts our assumption that $v$ was chosen such that $\Def(S(v))$ is maximal among vertices of degree $k$.


 Therefore it is impossible that $bu,aw\in \partial H$ and similarly it is impossible that $bw,au\in \partial H$, so $(\partial H)[\{a,b,u,w\}]$ does not contain a $4$-cycle and by Claim \ref{2path}, it  does not contain a triangle. Hence, $d_2(a)+d_2(b)+d_2(u)+d_2(w)\le \abs{E_2(v)}+3$.

By Claim \ref{newsmallclaim}, $d_1(a)+d_1(b)+d_1(u)+d_1(w)\le 2\lfloor{\frac{4k-2}3}\rfloor$,
so $d(a)+d(b)+d(u)+d(w)\le 2\lfloor{\frac{4k-2}3}\rfloor+\abs{E_2(v)}+3$. By Lemma \ref{Deficiensy}, $k-|E_2(v)|-1\ge\Def(S(v))\geq 4k-(d(a)+d(b)+d(u)+d(w))$.  Therefore, 
$$3k+|E_2(v)|+1\le d(a)+d(b)+d(u)+d(w)\le 2\left \lfloor{\frac{4k-2}3}\right \rfloor+\abs{E_2(v)}+3,$$
so $3k\le2\lfloor{\frac{4k-2}3}\rfloor +2$, i.e., $k\le2\lfloor{\frac{k-2}3}\rfloor +2$, 
a contradiction, because $k \ge 3$, proving the claim.
\end{proof}

By Claim \ref{2path} and Claim \ref{notwomatching}, $(\partial H)[S(v)]$ is triangle-free and does not contain a matching of size $2$, therefore its edge-set forms a star. 


\begin{claim}
\label{E2=2}
$|E_2(v)|=2$.
\end{claim}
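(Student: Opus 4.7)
My plan is to assume $s := |E_2(v)| \ge 3$ and derive a contradiction from the maximality of $\Def(S(v))$. First I pin down the structure near $v$: the star $(\partial H)[S(v)]$ has a centre $b$ and leaves $a_1, \ldots, a_s$, and varying the triple in Claim~\ref{2path} over pairs of distinct leaves forces $d(a_l) = k$ for every leaf, hence $d(b) = s+1$, $d_2(b) = s$, $d_1(b) = 1$, and $d_1(a_i) = k - 1$. Writing $M_b = \{p_1 p_2\}$, Claim~\ref{newsmallclaim} applied to the edge $b a_i$ forces $\{p_1, p_2\}\not\subseteq V(M_{a_i})$, so exactly one $\alpha_i \in \{p_1, p_2\}$ is missing from $V(M_{a_i})$; set $r_j := \abs{\{i : \alpha_i = p_j\}}$. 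If $\{b, a_i, z_i\}$ denotes the $E_2$-hyperedge through $a_i$, linearity against $\{b, p_1, p_2\}$ and against $\{b, a_j, z_j\}$ forces $z_i \in N(v) \setminus \{p_1, p_2\}$ and the $z_i$ pairwise distinct, and a further linearity argument gives $N(v) \setminus V(M_{a_i}) = \{z_i, \alpha_i\}$.

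Enumerating the hyperedges through $p_1$ and $p_2$ gives $d(p_j) = k - s + r_{3-j} + 1$, which forces $r_1, r_2 \le s-1$ (and hence each is at least $1$); the only vertices of $V(H)$ with positive deficiency are then $b, p_1, p_2$, with deficiencies $k-s-1$, $s-1-r_2$, $s-1-r_1$. Applying maximality of $\Def(S(v))$ to a leaf $a_i$ with $\alpha_i = p_j$ (using that $S(a_i) = (S(v) \setminus \{b\}) \cup \{p_j\}$) yields $r_{3-j} \ge 2s - k$; summing these inequalities and using $r_1 + r_2 = s$ gives $s \le 2k/3$.

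The decisive step is to apply the same maximality to the bulk vertices $q \in N(v) \setminus \{p_1, p_2, z_1, \ldots, z_s\}$. Each such $q$ has $d(q) = k$, and since $q \notin V(M_b) = \{p_1, p_2\}$ we have $bq \notin \partial H$, so $b \in S(q)$. Because all remaining deficient vertices of $V(H)$ lie in $\{p_1, p_2\} \subseteq N(v)$, $\Def(S(q))$ equals $k-s-1$ plus the nonnegative contribution $s-1-r_2$ if $p_1 \in S(q)$ and the nonnegative contribution $s-1-r_1$ if $p_2 \in S(q)$. Thus $\Def(S(q)) \le \Def(S(v)) = k-s-1$ forces, for each $j \in \{1,2\}$, either $p_j q \in \partial H$ or $r_{3-j} = s-1$.

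To close: if neither $r_j$ equals $s-1$, then $p_1 q, p_2 q \in \partial H$ for all $2k-2-s$ such $q$; using $\abs{N(p_j) \cap N(v)} = d(p_j) = k-s+r_{3-j}+1$ and subtracting the forced neighbour $p_{3-j}$ gives $r_{3-j} \ge k-2$, so $s = r_1 + r_2 \ge 2k-4$, which combined with $s \le 2k/3$ forces $k \le 3$ and then $s \le 2$, contradicting $s \ge 3$. Otherwise, after relabelling, $r_2 = s-1$ and $r_1 = 1 < s-1$; the same argument applied only at $p_2$ gives $2k-2-s \le k-s+1$, i.e., $k \le 3$, yielding the same contradiction. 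Hence $s \le 2$, and combined with Claim~\ref{smallclaim} this gives $|E_2(v)| = 2$. I expect the main obstacle to be recognising that the decisive test of maximality happens at the bulk vertices $q \in N(v) \setminus \{p_1, p_2, z_1, \ldots, z_s\}$ rather than at the leaves $a_i$ or at the $z_i$: only these $q$ are automatically guaranteed to have $b \in S(q)$, and only they translate the deficiency of $b$ into a genuine constraint on $N(p_1)$ and $N(p_2)$.
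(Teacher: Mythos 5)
Your proof is correct -- I checked the degree enumeration $d(p_j)=k-s+r_{3-j}+1$, the deficiency accounting (the deficiencies of $b,p_1,p_2$ sum to exactly $k-3$, so every other vertex has degree $k$), and both maximality applications, and they hold up. The skeleton matches the paper's: both proofs isolate the star centre (the paper calls it $o$), observe $d_1(o)=1$ with a unique link edge $uw$ (your $p_1p_2$), show that the entire deficiency $k-3$ of $V(H)$ is concentrated on $\{o,u,w\}$, and then exploit the maximality of $\Def(S(v))$ via a degree-$k$ vertex of $N(v)$ whose complement-neighbourhood captures both the centre and a deficient $u$ or $w$. The execution diverges in two places. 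The paper obtains $d(u)+d(w)\le 2k+2-p$ by a soft count of the hyperedges meeting $\{u,w\}$ (each of $o,a_1,\dots,a_p$ lies in at most one such hyperedge by sail-freeness), whereas you compute $d(p_1)$ and $d(p_2)$ exactly via the bookkeeping of which of $p_1,p_2$ each $V(M_{a_i})$ misses; your formula recovers the paper's bound with equality. The paper then finishes structurally: a single maximality application (using that $\abs{N(o)\cap N(v)}+\abs{N(u)\cap N(v)}-1\le 2k-1$ guarantees a suitable witness $x$) shows $d(u)=d(w)=k$, whence $d(o)=3$ and $p=d_2(o)=2$ at once. You finish numerically, needing two maximality applications (at the leaves, giving $s\le 2k/3$, and at the bulk vertices of $N(v)$) plus a case split on whether some $r_j=s-1$, ending in $k\le 3$ and hence $s\le 2$. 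The paper's route is shorter and avoids the $r_j$ bookkeeping entirely; yours buys more explicit structural information about $p_1,p_2$ at the cost of a longer endgame.
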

\begin{proof}
Assume for a contradiction that $|E_2(v)|\geq 3$.
Let $E_2(v)=\{oa_1b_1, oa_2b_2, ... oa_pb_p\}$ where $p \geq 3$, $o\in S(v)$, and for each $i\in[p]$ we have $a_i\in S(v)$ and $b_i\in N(v)$. Note that $(\partial H)[S(v)]$ is a star with the edge-set $\{oa_i|1\leq i\leq p\}$. Recall that $v$ is an isolated vertex in $(\partial H)[S(v)]$, so $k=\abs{S(v)\setminus\{v\}}\ge p+1$.

By Claim \ref{2path}, for any $\{i,j\}\subset [p]$, $k-p-1=\Def(S(v))=\Def(\{a_i,o,a_j\})$ and since $p>2$, this means that we have $\Def(\{a_i\})=0$ for every $i\in[p]$. Therefore, 
\begin{equation}
\label{starcenter}
    k-p-1=\Def(S(v))=\Def(\{o\}).
\end{equation}
 So $d(o) = p+1$. Since $d_2(o)=\abs{E_2(v)}=p$, we get $d_1(o)=1$. Let $ouw$ be the only edge of $E_1(v)$ which is incident to $o$.
Let $Q=\{h\in E(H)\mid u\in h \text{ or } w\in h\}$. For $x\in \{o,a_1,a_2,\ldots a_p\}$, there is at most one hyperedge $h\in Q$, such that $x\in h$, otherwise the hyperedge $ouw$ would be in $N(x)$, giving us a sail in $H$. Since $H$ is linear, for any $x\in V(H)$ there are at most two hyperedges of $Q$ containing $x$ and since there is no hyperedge contained in $N(v)$, for every $h\in Q$ there exists a vertex $x\in S(v)$ such that $x\in h$. Therefore,
$$\abs{Q}\le\abs{\{o,a_1,a_2,\ldots, a_p\}}+ 2\abs{S(v)\setminus \{o,a_1,a_2,\ldots, a_p\}}=p+1+2(k-p)=2k+1-p.$$ 
Noting that $\abs Q = d(u) +d (w) -1$, we have, $ d(o)+(d(u)+d(w))=d(o)+(\abs{Q}+1)\le p+1+(2k+1-p+1)=2k+3 $
Thus, $\Def(\{o,u,w\}) \ge k-3$. Combining this with Lemma \ref{Deficiensy} we get,  $\Def(V(H))=\Def(\{o,u,w\})=k-3$. So for any $x\in V(H)\setminus\{o,u,w\}$, we have $d(x)=k$ and 
\begin{equation}
\label{sumdouw}
   d(o)+d(u)+d(w)=2k+3.
\end{equation}

We claim that $d(u)=d(w)=k$. Assume otherwise. Without loss of generality, we may assume $d(u)\not=k$, i.e., $d(u)\le k-1$. Since $d_1(o)=1$ and $d_2(o)=p$, $\abs{N(o)\cap N(v)}=p+2\le k+1$.
Since $ d(u)\le k-1$ it is easy to see that $\abs{N(u)\cap N(v)}\le k-1$. Notice that $w\in (N(o)\cap N(v))\cap (N(u)\cap N(v))$ therefore $\abs{(N(o)\cap N(v))\cap (N(u)\cap N(v))}\ge 1$, so we have,
$$\abs{(N(o)\cap N(v))\cup (N(u)\cap N(v))}\le \abs{N(o)\cap N(v)}+\abs{N(u)\cap N(v)}-1\le 2k-1.$$
Therefore there exists a vertex $x\in N(v)\setminus (N(o)\cup N(u))$ and since $x\notin N(o)\cup N(u)\supseteq \{o,u,w\}$, we have $d(x)=k$. By the definition of $x$ it is clear that $o,u\in S(x)$, so $\Def(S(x))\ge \Def(\{o,u\})> \Def(\{o\})=\Def(S(v))$, which contradicts our assumption that $\Def(S(v))$ was maximal. So our assumption that $d(u)<k$ is wrong, proving that $d(u)=d(w)=k$, so by \eqref{sumdouw}
we have $d(o)=3$. Then $p=d_2(o)=2$, contradicting our assumption that $p \ge 3$. Therefore $\abs{E_2(v)}=2$.
\end{proof}



Having established Claim \ref{E2=2}, let us suppose $E_2(v)=\{abc',cba'\}$ where $a,b,c\in S(v)$ and $a',c'\in N(v)$.
By Claim \ref{2path} and Lemma \ref{Deficiensy}, we have $\Def(\{a,b,c\})= k - \abs{E_2(v)} -1 = k-3 \ge \Def (V(H))$. 
But on the other hand, trivially, $\Def(\{a,b,c\}) \le \Def(V(H))$. Therefore,  $\Def (V(H))=\Def(\{a,b,c\})=k-3$, so $d(a)+d(b)+d(c)=3k - (k-3) = 2k+3$ and $d(x)=k$ for every $x\in V(H)\setminus\{a,b,c\}$. Notice that $d_2(a)+d_2(b)+d_2(c)=4$, so $d_1(a)+d_1(b)+d_1(c)=2k-1$. 

Recall, that for each $x\in S(v)$. Let $M_x=\{yz \mid xyz \in H, y, z \in N(v) \}$, and let $M_{abc}=M_a\cup M_b\cup M_c$. 
Then by the discussion in the previous paragraph, we have $\abs{M_x}=k$ for every $x\in S(v)\setminus\{a,b,c\}$ (i.e., each $M_x$ is a perfect matching) and 
\begin{equation}
\label{mabc2}
 \abs{M_{abc}}= \abs{M_a} + \abs{M_b} + \abs{M_c} = 2k-1.   
\end{equation}

Let $G=(\partial H)[N(v)]$. Notice that for every $x\in N(v)$, $d_G(x)\le d(x)\le k$ and $\abs{E(G)}=\abs{S(v)\setminus\{a,b,c\}}k+ \abs{M_{abc}} = (k-2)k + (2k-1) = k^2-1$, so precisely two vertices of $G$ have degree $k-1$ and the rest of the vertices in $G$ have degree $k$.  

Since $abc',bca'\in E(H)$, by the linearity of $H$, $a'\notin V(M_c)\cup V(M_b)$ and $c'\notin V(M_a)\cup V(M_b)$. Therefore, $d_G(a')=\abs{\{x\mid a'\in V(M_x)\}}\le k-1$ and $d_G(c')=\abs{\{x\mid c'\in V(M_x)\}}\le k-1$. It follows that the only two vertices of degree $k-1$ in $G$ are $a'$ and $c'$. This implies that $a'\in V(M_x)$ for every $x\in S(v)\setminus \{b,c\}$. In particular, $a' \in V(M_a)$. 
Notice that $a'c'\notin G$, otherwise the hyperedge $abc'$ is in $N(a')$, which is a contradiction. In summary, we proved that $a'c'\notin G$ and $d_G(a')=d_G(c')=k-1$, so $G\cup \{a'c'\}$ is a $k$-regular graph. Also, since $G\setminus M_{abc}$ is a $(k-2)$-regular graph (being the union of $k-2$ perfect matchings $M_x$ with $x \in S(v) \setminus \{a,b,c\}$), it follows that $M_{abc}\cup \{a'c'\}$ is a $2$-regular graph.



\begin{claim}
    \label{triangleinMabc}
    Suppose $u_1u_2u_3$ is a triangle in $G$. Then $u_1u_2u_3$ is a triangle in $M_{abc}$ and $u_1,u_2,u_3\in V(G)\setminus \{a',c'\}$.
\end{claim}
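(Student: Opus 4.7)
The plan is to settle the two conclusions separately, each by a short contradiction argument that reuses facts established in the paragraphs just preceding the claim.

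For the assertion that every edge of the triangle lies in $M_{abc}$, I will suppose toward a contradiction that some edge, say $u_1u_2$, lies in $M_x$ for some $x\in S(v)\setminus\{a,b,c\}$. For every such $x$, the set $M_x$ has already been shown to be a perfect matching on $N(v)$ (since $|N(v)|=2k$ and $|M_x|=d_1(x)=k$), so $u_3\in V(M_x)$, whence $xu_3\in\partial H$. Together with $u_1u_3,u_2u_3\in\partial H$ coming from the triangle, this yields $\{x,u_1,u_2\}\subseteq N(u_3)$, so the hyperedge $xu_1u_2\in E(H)$ lies entirely inside $N(u_3)$. By the observation recalled at the beginning of Section~\ref{sec2}, this produces a sail in $H$, a contradiction. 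Repeating the same argument with $u_1$ or $u_2$ playing the role of $u_3$ finishes the first part.

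For the assertion that $\{u_1,u_2,u_3\}\cap\{a',c'\}=\emptyset$, I will assume toward contradiction that, without loss of generality, $u_1=a'$. By the first part, both edges $u_1u_2$ and $u_1u_3$ belong to $M_{abc}=M_a\cup M_b\cup M_c$ and are incident to $u_1$. The paragraph just before the claim already established, using linearity of $H$ together with $abc',bca'\in E(H)$, that $a'\notin V(M_b)\cup V(M_c)$. Hence both $u_1u_2$ and $u_1u_3$ must lie in $M_a$; but $M_a$ is a matching, and so it cannot contain two edges meeting at the common vertex $a'$. The case $u_1=c'$ is entirely analogous, using instead $c'\notin V(M_a)\cup V(M_b)$ to force both edges into $M_c$.

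The only point requiring any care is the invocation of perfectness of $M_x$ for $x\in S(v)\setminus\{a,b,c\}$ in the first step; once this (already proved) fact is in hand, the ``hyperedge inside a neighborhood produces a sail'' trick drives both halves to a clean contradiction, so I expect no genuine obstacle in the write-up.
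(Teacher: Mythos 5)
Your proposal is correct and follows essentially the same route as the paper: the first part is the identical "perfect matching $M_x$ forces $u_3\in V(M_x)$, so $u_1u_2x\subseteq N(u_3)$ gives a sail" argument, and your second part is just an unpacked version of the paper's observation that $a'$ and $c'$ have degree $1$ in $M_{abc}$ (which the paper derives from exactly the facts you cite: $a'\notin V(M_b)\cup V(M_c)$, $a'\in V(M_a)$, and $M_a$ being a matching).
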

 \begin{proof}
 Suppose for a contradiction that $u_1u_2\notin M_{abc}$. Then there exists $x\in S(v)\setminus \{a,b,c\}$ such that $u_1u_2x\in E(H)$. Moreover, $\abs{M_x}=k$ so $V(M_x)=N(v)$ contains the vertex $u_3$. Therefore the hyperedge $u_1u_2x$ is contained in $N(u_3)$, a contradiction. 
 
 By the same argument, we have  $u_2u_3, u_1u_3 \in M_{abc}$, proving that $u_1u_2u_3$ is a triangle in $G$. It remains to show $u_1, u_2, u_3 \in V(G) \setminus \{a', c'\}$. Indeed, recall that degrees of $a'$ and $c'$ are $1$ in $M_{abc}$, proving the claim. 
 \end{proof}
 
We will distinguish two cases, depending on whether there is a triangle in $G$.  First, let us assume that there is a triangle $x_1x_2x_3$ in $G$. Then by Claim \ref{triangleinMabc}, $x_1x_2x_3$ is a triangle in $M_{abc}$ as well. Since degrees in $M_{abc}$ are at most $2$, each of the edges $x_1x_2,x_2x_3,x_1x_3$ is not contained in any other triangle of $M_{abc}$ besides $x_1x_2x_3$. Hence by Claim \ref{triangleinMabc}, each of the edges $x_1x_2,x_2x_3,x_1x_3$ is not contained in any other triangle of $G$. So for each vertex $w\in V(G)\setminus \{x_1,x_2,x_3\}$, at most one of the edges $w_1x_1,wx_2,wx_3$ are in $G$. Therefore, $d_G(x_1)+d_G(x_2)+d_G(x_3)\le 6+2k-3=2k+3$. But by Claim \ref{triangleinMabc}, $a',c'\notin \{x_1,x_2,x_3\}$, so $d_G(x_1)=d_G(x_2)=d_G(x_3)=k$, implying that $3k\le 2k+3$, i.e., $k=3$.
In this case, since $M_{abc}\cup\{a'c'\}$ is a $2$-regular graph containing the triangle $x_1x_2x_3$, $M_{abc}\cup\{a'c'\}$ is the vertex-disjoint union of two triangles $x_1x_2x_3$ and $y_1a'c'$ for some $y_1\in N(v)$. It is easy to see that $y_1a'\in M_a$ and $y_1c'\in M_b$, and without loss of generality we may assume $M_a=\{x_1x_2,y_1a'\}$, $M_c=\{x_1x_3,y_1c'\}$ and $M_b= \{x_2x_3\}$. This implies that $M_v$ is a matching between the sets $\{x_1,x_2,x_3\}$ and $\{y_1,a',c'\}$. Notice that if $x_2c'$ or $x_3a'$ are edges of $\partial H$, then $abc'$ is in $N(x_2)$ or $bca'\in N(x_3)$, a contradiction. So $x_2c',x_3a'\notin M_v$. Thus it is easy to see that $M_a,M_b,M_c$ and $M_v$ are the same as described in Construction \ref{smallconstruction2}, with $y_2=a'$ and $y_3=c'$. So $H$ can be constructed by means of Construction \ref{smallconstruction2}.

Now suppose $G$ is triangle-free. 
 Since $\abs{M_v}=k\ge 3$ there is an edge $uw\in M_v$ such that $u,w\notin \{a',c'\}$. Let $X$ and $Y$ be the sets of neighbours of $u$ and $w$ in the graph $G$, respectively. Since $G$ is triangle-free, $X$ and $Y$ are disjoint independent sets of $G$ and since $u,w\notin \{a',c'\}$ we have $d_G(u)=d_G(w)=k$, therefore $\abs{X}=\abs{Y}=k$. So $G$ is a balanced bipartite graph with parts $X$ and $Y$. We claim that $G=K_{k,k}\setminus \{a'c'\}$. As $G \cup \{a'c'\}$ is a $k$-regular graph, it suffices to prove that $a'$ and $c'$ are in different parts of $G$. Let us assume the contrary. Then without loss of generality $a',c'\in X$. For any vertex $y\in Y$ since $d_G(y)=k$, $a'y,c'y\in G$. Therefore $d_G(a')=d_G(c')=k$, a contradiction. Hence $G=K_{k,k}\setminus \{a'c'\}$ with parts $X, Y$ and $a'\in X, c'\in Y$.
 
 If there is a vertex $x\in V(M_a)\cap V(M_b)\cap X$, then clearly $x\not=a'$ so $xc'\in G\subseteq \partial H$, therefore the hyperedge $abc'$ is in $N(x)$, a contradiction. So $V(M_a)\cap V(M_b)\cap X=\emptyset$, and by symmetry it then follows that
\begin{equation}
    \label{XMB}
    V(M_a)\cap V(M_b)\cap X=V(M_b)\cap V(M_c)\cap Y=\emptyset.
\end{equation}

Let $B_0 = M_{abc}\cup\{a'c'\}$. As $B_0$ is $2$-regular, it can be decomposed into cycles $C_1,C_2,\ldots, C_l$. We may assume $a'c'\in C_l$.  We will show that $M_b\subset C_l$. Clearly $\abs{M_a}+\abs{M_c}=d(a)-1+d(c)-1\le 2k-2$. So by \eqref{mabc2}, we have $\abs{M_b} \ge 1$. 

Without loss of generality, let us assume $\abs{M_a}\ge \abs{M_c}$. Then combining this with \eqref{mabc2}, we have $\abs{M_a}\ge k-(1+\abs{M_b})/2$. Moreover, by \eqref{XMB}, we know that $M_a$ and $M_b$ are vertex-disjoint inside $X$, so $\abs{M_a}+\abs{M_b}\le \abs X = k$. Therefore combining the previous two inequalities, we get $\abs{M_b}\le(1+\abs{M_b})/2$, i.e., $\abs{M_b} \le 1$. But by the discussion in the previous paragraph, $\abs{M_b} \ge 1$, so $\abs{M_b}=1$. By Claim \ref{newsmallclaim}, both $M_a\cup M_b$ and $M_c\cup M_b$ do not contain a $3$-path, so it is easy to see that in each $C_i$ for $i\in[l-1]$ there are equal number of edges from $M_a$ and $M_c$, therefore $\abs{C_i\setminus M_b}$ is even for each $i\in[l-1]$. Then as the cycles $C_i$ are all of even length, and $M_b$ contains exactly one edge $e$, $e$ cannot be contained in any $C_i$ for $i \in [l-1]$. Therefore $M_b\subset C_l$.
Let $\{x'y'\}=M_b$ where $x'\in X$ and $y'\in Y$. If $a'y'\in C_l$, then $a'y'\in M_a$ so the hyperedge $aa'y'$ is in $N(b)$, a contradiction. Similarly $c'x'\notin M_c$, therefore we have $a'y',c'x'\notin C_l$.

Now it is easy to see that the hypergraph $H$ can be obtained by means of Construction \ref{generalconstr}, where $B=G\cup\{a'c'\}$ and $\{M_1,M_2,\ldots M_{k-2}\}=\{M_x\mid x\in S(v)\setminus\{a,b,c\}\}$ and for each $e\in B_0$, the color of $e$ is $w$ if $e\in M_w$ with $w\in \{a,b,c\}$.

\section{Proofs that the constructions are sail-free}
\label{constructionsproof}

\subsection{Proof that Construction \ref{generalconstr} is sail-free}
Let $H_1$ be the hypergraph obtained by Construction \ref{generalconstr}. For each $w\in\{a,b,c\}$, let $M_w$ be the set of edges of $B_0$ having the color $w$.

\begin{claim}
\label{generalconstrclaim}
  $x'\notin V(M_a)$ and $y'\notin V(M_c)$.
\end{claim}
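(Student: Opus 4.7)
The plan is to exploit the bipartiteness of $B$, which forces every $2$-factor $B_0$ to consist only of even cycles. In the graph $B_0\setminus\{a'c',x'y'\}$, the cycles $C_1,\ldots,C_{l-1}$ remain intact, while $C_l$ splits into two vertex-disjoint paths whose four endpoints are exactly $a',c',x',y'$ (each of these four vertices has degree $2$ in $B_0$ and degree $1$ in $B_0\setminus\{a'c',x'y'\}$). The key reduction is that $x'$ has a unique incident edge in $B_0\setminus\{a'c',x'y'\}$, so to prove $x'\notin V(M_a)$ it suffices to show that this unique edge is colored $c$; the statement for $y'$ is analogous.

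The argument splits into two cases depending on the cyclic arrangement of $a',c',x',y'$ in $C_l$. In Case A, the two paths obtained from $C_l\setminus\{a'c',x'y'\}$ are $(c',\ldots,x')$ and $(a',\ldots,y')$; since $c'\in Y$ and $x'\in X$ lie in opposite parts of the bipartition, the $c'$-$x'$ path has odd length, and likewise the $a'$-$y'$ path has odd length. In Case B, the paths are $(c',\ldots,y')$ and $(a',\ldots,x')$; here $c',y'\in Y$ lie in the same part, so the $c'$-$y'$ path has even length, and the same holds for the $a'$-$x'$ path.

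The final step is a parity check: in any proper $2$-coloring of a path with two colors, the edges at the two endpoints receive the same color iff the path has odd length. In Case A, the odd-length $c'$-$x'$ path has its edge at $c'$ colored $c$ by construction, which forces the edge at $x'$ to be colored $c$; similarly the edge at $y'$ is forced to be colored $a$. In Case B, the even-length paths give opposite colors at the two ends, so the edge at $x'$ is colored $c$ (from the $a'$-$x'$ path, where the edge at $a'$ is colored $a$) and the edge at $y'$ is colored $a$ (from the $c'$-$y'$ path, where the edge at $c'$ is colored $c$). In either case the unique edge incident to $x'$ in $B_0\setminus\{a'c',x'y'\}$ is colored $c$ and the unique edge incident to $y'$ is colored $a$, giving $x'\notin V(M_a)$ and $y'\notin V(M_c)$. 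There is no real obstacle; the only care needed is to set up the two cases and verify that they exhaust all cyclic arrangements of the four marked vertices along $C_l$.
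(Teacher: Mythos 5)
Your proof is correct and follows essentially the same parity argument as the paper: both track the alternating $a$/$c$-coloring along $C_l$ from the anchored edges at $a'$ and $c'$ to the edges incident to $x'$ and $y'$, using the bipartiteness of $B$ to control the relevant parities. Your case split on which of $x',y'$ is joined to $a'$ by an arc of the broken cycle corresponds exactly to the paper's case split on the parity of the position $j$ of $x'y'$ in the path $C_l\setminus\{a'c'\}$, and your final reduction (that membership of $x'$ in $V(M_a)$ is decided by the color of its unique edge in $B_0\setminus\{a'c',x'y'\}$) is the same $2$-regularity observation with which the paper concludes.
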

\begin{proof}
Notice that $C_l\setminus\{a'c'\}$ is a properly colored path. Let $e_1,e_2,\ldots, e_p$ be the edges along this path from $a'$ to $c'$ (i.e., $a' \in e_1$ and $c' \in e_p$). By definition,  $e_1\in M_a$ and $e_p \in M_c$. Let $e_j=x'y'\in M_b$. 

First let us assume $j$ is even. Then $e_1, e_3, \ldots, e_{j-1}\in M_a$ and $e_p, e_{p-2}, \ldots, e_{j+1}\in M_c$. Notice that for all even $i\in[t]$, $e_{i-1}\cap\ e_{i}\subseteq Y$  and $e_{i}\cap\ e_{i+1} \subseteq X$, so in particular $e_{j-1}\cap e_{j} \subset Y$ and $e_{j}\cap e_{j+1}\subseteq X$, i.e., $e_{j-1}\cap e_{j}=\{y'\}$ and $e_{j}\cap e_{j+1}= \{x'\}$. So since $e_{j-1}\in M_a$ and $e_{j+1}\in M_c$ we have $y'\in V(M_a)$ and $x'\in V(M_c)$.
If $j$ is odd, by a similar argument we get $e_{j-1}\in M_c$, $e_{j+1}\in M_a$, $e_{j-1}\cap e_{j}=\{x'\}$ and $e_{j+1}\cap e_{j}=\{y'\}$ so $x'\in V(M_c)$ and $y'\in V(M_a)$. 

In summary, we have $x'\in V(M_c)$ and $y'\in V(M_a)$. It follows that since $x'y'\in M_b$, we have $x'\in V(M_b)\cap V(M_c)$ and $y'\in V(M_b)\cap V(M_a)$. Recall that $M_a\cup M_b\cup M_c\subseteq B_0$ and $B_0$ is a $2$-regular graph, so $V(M_a)\cap V(M_b)\cap V(M_c)=\emptyset$. Therefore  $x'\notin V(M_a)$ and $y'\notin V(M_c)$.
\end{proof}

Let us assume for a contradiction that there exists a sail in $H_1$. Let $w\in V(H_1)$ and $h\in E(H_1)$ such that $h\subseteq N(w)$.
Then notice that either $\abs{h\cap V(B)}=2$ and when $\abs{h\cap V(B)}=1$. 

First consider the case when $\abs{h\cap V(B)}=2$. Then there exist vertices $x\in X,y\in Y$ and $z\in Z$ where $h=xyz$.
$xyw$ is a triangle in $\partial H_1$, therefore $w\notin V(B)$ so $w\in Z$. $zw\in \partial H_1$ so $zw\in\{ab,bc\}$. Without loss of generality we may assume $zw=ab$. If $w=b$ and $z=a$, we have $x,y\in N(b)\cap V(B)=\{a',c',x',y'\}$ and $xy\in M_a$, which is impossible, because $a'y',c'x'\notin C_l$, $a'c',a'x',c'y'\notin \partial H_1$ and $x'y'\in M_b$. If $z=b$ and $w=a$, then $xy\in M_b=\{x'y'\}$ so $x',y'\in N(a)\cap V(B)=V(M_a)\cup \{c'\}$, i.e. $x'\in V(M_a)$ which contradicts Claim \ref{generalconstrclaim}.

Finally, consider the case when $\abs{h\cap V(B)}=1$. Then $h=a'bc$ or $h=c'ab$. Without loss of generality, we may assume $h=c'ab$. Then $w\in N(a)\cap N(b)\cap N(c')$. $N(b)=\{a,c,a',c',x',y'\}$, and it is easy to see that $Y\cap N(c')=\emptyset$ and $a'c'\notin \partial H_1$, therefore $y',a',c'\notin N(c')$.  By Claim \ref{generalconstrclaim}, $x'\notin V(M_a)$, so $x',a,c\notin N(a)$. Thus $N(a)\cap N(b)\cap N(c')=\emptyset$, a contradiction.


\subsection{Proof that Construction \ref{seondconstr} is sail-free}
Let $H_2$ be the hypergraph obtained by Construction \ref{seondconstr}. For each $w\in\{a,b,c\}$, let $M_w$ be the set of edges of $B_0$ with color $w$.

Let us assume for a contradiction that there exists a vertex $w\in V(H_2)$ and $h\in E(H_2)$ such that $h\subseteq N(w)$.

Then either $\abs{h\cap V(B)}=0$,  or $\abs{h\cap V(B)}=2$. 

If $\abs{h\cap V(B)}=0$, then $h=abc$. Clearly $N(a)\cap N(b)\cap N(c)=V(M_a)\cap V(M_b)\cap V(M_c)=\emptyset$, i.e. such a vertex $w$ does not exist, a contradiction.

If $\abs{h\cap V(B)}=2$  then there exist $x\in X,y\in Y$ and $z\in Z$ where $h=xyz$.
$xyw$ is a triangle in $\partial H_2$, therefore $w\notin V(B)$ so $w\in Z$. Note that $zw\in (\partial H_2)[Z]=\{ab,bc,ac\}$, so $w,z\in \{a,b,c\}$. Without loss of generality we may assume $w=b$ and $z=a$, so $h=axy$, i.e. $xy\in M_a$. Therefore $x,y\in N(w)=N(b)$ i.e., $x,y\in V(M_b)$. Then since $xy\notin M_b$ and $x,y\in V(M_b)$, both of the edges adjacent to $xy$ in $B_0$ must be in $M_b$. But then we have $3$ consecutive edges colored with only two colors, $a$ and $b$, a contradiction.


\subsection{Proof that Construction \ref{smallconstruction1} is sail-free}
Let $H_3$ be the hypergraph obtained by Construction \ref{smallconstruction1}. 
Let us assume for a contradiction that there exists $w\in V(H_3)$ and $h\in E(H_3)$ such that $h\subseteq N(w)$. Then either $\abs{h\cap V(B)}=0$,  or $\abs{h\cap V(B)}=2$. 

If $\abs{h\cap V(B)}=0$, then $h=abc$, so $w\in N(a)\cap N(b)\cap N(c)= V(M_a)\cup V(M_b)\cap V(M_c)=\emptyset$, a contradiction. 

If $\abs{h\cap V(B)}=2$  then there exist vertices $x,y\in V(B)$ and $z\in Z$ where $h=xyz$.
First let us consider the case when $w\in V(B)$. Without loss of generality we may assume $w=x_1$. $xyw$ is a triangle in $\partial H_3$, therefore $xy=x_2x_3$, i.e., $x_2x_3\in M_z$ and $z\in\{a,b,c\}$. But then since $x_2,x_3\in V(M_z)$, it is easy to see that $x_1\notin V(M_z)$. Therefore $z\notin N(x_1)=N(w)$, a contradiction.
So $w\notin V(B)$, i.e. $w\in Z$. Since $zw\in \partial H_3$, we have $zw\in \{ab,bc,ac\}$. Therefore $w,z\in \{a,b,c\}$. Without loss of generality, we may assume $w=b$ and $z=a$, so $h=axy$, i.e., $xy\in M_a$. Since $a,x,y\in N(w)=N(b)$, we have $x,y\in V(M_b)$. Then since $xy\notin M_b$ and $x,y\in V(M_b)$, both of the edges adjacent to $xy$ in $M_a\cup M_b\cup M_c$ must be in $M_b$. But since $M_a\cup M_b\cup M_c$ is a union of two properly colored vertex-disjoint triangles, this is impossible.


\subsection{Proof that Construction \ref{smallconstruction2} is sail-free}
Let $H_4$ be the hypergraph obtained by Construction \ref{smallconstruction2}. Let $B=M_a\cup M_b \cup M_c \cup M_v$, clearly $V(B)=\{x_1,x_2,x_3,y_1,y_2,y_3\}$.
Let us assume for a contradiction that there exists $w\in V(H_4)$ and $h\in E(H_4)$ such that $h\subseteq N(w)$. 
Then either $\abs{h\cap V(B)}=2$ or $\abs{h\cap V(B)}=1$. 

If $\abs{h\cap V(B)}=2$  then there exist $x,y\in V(B)$ and $z\in Z$ where $h=xyz$. Clearly, $wxy$ is a triangle in $\partial H_4$ therefore $\{w,x,y\}=\{x_1,x_2,x_3\}$ and $z\in\{a,b,c\}$. It is easy to see that $z$ is adjacent to exactly $2$ of the vertices $\{x_1,x_2,x_3\}$, so it is impossible that $N(w)$ contains $zxy$. 

Now suppose $\abs{h\cap V(B)}=1$. Then $h=y_2bc$ or $h=y_3ab$.
Without loss of generality we may assume $h=y_3ab$, so $w\in N(a)\cap N(b)\cap N(y_3)$.
$N(a)\cap N(b)=\{y_2,x_2,y_3\}$, so since $y_3y_2\notin \partial H_4$, we have $w=x_2$. Therefore $x_2y_3\in \partial H_4$, i.e., $x_2y_3\in B$ which is not the case, a contradiction.

\section*{Acknowledgement}
We are grateful to Andr\'as Gy\'arf\'as for helpful discussions (especially concerning Construction 2), for useful comments on this paper and for telling us about this problem.

\end{document}